\numberwithin{equation}{section}
\numberwithin{figure}{section}
\theoremstyle{plain}
\newtheorem*{conjecture*}{\protect\conjecturename}
\theoremstyle{plain}
\newtheorem{thm}{\protect\theoremname}[section]
\theoremstyle{plain}
\newtheorem{cor}[thm]{\protect\corollaryname}
\theoremstyle{plain}
\newtheorem{prop}[thm]{\protect\propositionname}
\theoremstyle{plain}
\newtheorem{lem}[thm]{\protect\lemmaname}
\theoremstyle{remark}
\newtheorem{rem}[thm]{\protect\remarkname}
\theoremstyle{plain}
\newtheorem{conjecture}[thm]{\protect\conjecturename}
\theoremstyle{plain}
\newtheorem{question}[thm]{\protect\questionname}
\theoremstyle{remark}
\newtheorem*{acknowledgements*}{\protect\acknowledgementname}
\let\originalleft\left
\let\originalright\right
\renewcommand{\left}{\mathopen{}\mathclose\bgroup\originalleft}
\renewcommand{\right}{\aftergroup\egroup\originalright}
\providecommand{\acknowledgementname}{Acknowledgements}
\providecommand{\conjecturename}{Conjecture}
\providecommand{\corollaryname}{Corollary}
\providecommand{\lemmaname}{Lemma}
\providecommand{\propositionname}{Proposition}
\providecommand{\questionname}{Question}
\providecommand{\remarkname}{Remark}
\providecommand{\theoremname}{Theorem}
\begin{document}
\global\long\def\r#1{#1^{-}}

\global\long\def\br#1#2{{\rm bar}_{#2}\left(#1\right)}

\global\long\def\bry#1{{\rm bar}\left(#1\right)}

\global\long\def\irn{\int_{\R^{n}}}

\global\long\def\d#1{\,{\rm d}#1}

\global\long\def\R{\mathbb{R}}

\global\long\def\C{\mathbb{C}}

\global\long\def\Z{\mathbb{Z}}

\global\long\def\N{\mathbb{N}}

\global\long\def\Q{\mathbb{Q}}

\global\long\def\T{\mathbb{T}}

\global\long\def\F{\mathbb{F}}

\global\long\def\vp{\varphi}

\global\long\def\Sph{\mathbb{S}}

\global\long\def\sub{\subseteq}

\global\long\def\one{\mathbbm1}

\global\long\def\vol#1{\text{vol}\left(#1\right)}

\global\long\def\EE{\mathbb{E}}

\global\long\def\sp{{\rm sp}}

\global\long\def\iprod#1#2{\langle#1,\,#2\rangle}

\global\long\def\uball{B_{2}^{n}}

\global\long\def\conv#1{{\rm conv}\left(#1\right)}

\global\long\def\met#1{{\rm M}\left(#1\right)}

\global\long\def\mfloat#1#2{{\rm M}_{#1}\left(#2\right)}

\global\long\def\mwfloat#1#2#3{{\rm M}_{#1}\left(#2,#3\right)}

\global\long\def\supp#1{{\rm supp}\left(#1\right)}

\global\long\def\eps{\varepsilon}

\global\long\def\PP{\mathbb{P}}

\global\long\def\vein#1{{\rm vein}\left(#1\right)}

\global\long\def\fvein#1{{\rm vein^{*}}\left(#1\right)}

\global\long\def\bfvein#1{{\rm \mathbf{vein^{*}}}\left(\mathbf{#1}\right)}

\global\long\def\ill#1{{\rm ill\left(#1\right)}}

\global\long\def\fill#1{{\rm ill^{*}\left(#1\right)}}

\global\long\def\ovr#1{{\rm ovr\left(#1\right)}}

\global\long\def\norm#1{\left\Vert #1\right\Vert }

\global\long\def\proj{{\rm Pr}}

\global\long\def\ra{\Rightarrow}

\global\long\def\H#1#2{H^{+}\left(#1,\,#2\right)}

\global\long\def\sign{{\rm sign}}

\global\long\def\ent{{\rm Ent}}

\global\long\def\kb#1{\Delta_{KB}\left(#1\right)}

\global\long\def\kn{\mathcal{K}_{n}}

\global\long\def\st{\,:\,}

\title{Improved bounds for Hadwiger's covering problem via thin-shell estimates}
\begin{abstract}
A central problem in discrete geometry, known as Hadwiger's covering
problem, asks what the smallest natural number $N\left(n\right)$
is such that every convex body in $\R^{n}$ can be covered by a union
of the interiors of at most $N\left(n\right)$ of its translates.
Despite continuous efforts, the best general upper bound known for
this number remains as it was more than sixty years ago, of the order
of ${2n \choose n}n\ln n$.

In this note, we improve this bound by a sub-exponential factor. That
is, we prove a bound of the order of ${2n \choose n}e^{-c\sqrt{n}}$
for some universal constant $c>0$. 

Our approach combines ideas from \cite{ArtSlom14-FracCov} by Artstein-Avidan
and the second named author with tools from Asymptotic Geometric Analysis.
One of the key steps is proving a new lower bound for the maximum
volume of the intersection of a convex body $K$ with a translate
of $-K$; in fact, we get the same lower bound for the volume of the
intersection of $K$ and $-K$ when they both have barycenter at the
origin. To do so, we make use of measure concentration, and in particular
of thin-shell estimates for isotropic log-concave measures. 

Using the same ideas, we establish an exponentially better bound for $N\left(n\right)$ when restricting our attention to convex bodies that are $\psi_{2}$. By a slightly different approach, an exponential
improvement is established also for classes of convex bodies with positive modulus of convexity.
\end{abstract}

\author{Han Huang}



\author{Boaz A. Slomka}



\author{Tomasz Tkocz}



\author{Beatrice-Helen Vritsiou}



\subjclass[2010]{52A20, 52C17, 52A23, 60D05}

\keywords{Covering number, convex body, measure of symmetry,  isotropic, thin-shell, concentration\smallskip}

\thanks{H. Huang: {Department of Mathematics, University of Michigan, 2074 East Hall, 530 Church St., Ann Arbor, MI 48109-1043, USA; e-mail: {sthhan@umich.edu}}}

\thanks{B.A. Slomka: Department of Mathematics, Weizmann Institute of Science, Rehovot 76100, Israel; e-mail: boazslomka@gmail.com (corresponding author)}

\thanks{T. Tkocz: Department of Mathematical Sciences, Carnegie Mellon University, Wean Hall 6113, Pittsburgh, PA 15213, USA; e-mail: ttkocz@math.cmu.edu}

\thanks{B-H. Vritsiou: Department of Mathematical and Statistical Sciences, University of Alberta, CAB 632, Edmonton, AB, Canada T6G 2G1; e-mail: vritsiou@ualberta.ca}

\maketitle

\section{Introduction}

\subsection{Hadwiger's covering problem}

A long-standing problem in discrete geometry asks whether every convex
body in $\R^{n}$ can be covered by a union of at most $2^{n}$ translates
of its interior. It also asks whether $2^{n}$ translates are needed
only for affine images of the $n$-cube. 

This problem was posed by Hadwiger \cite{Hadwiger57} for $n\ge3$
but was already considered and settled for $n=2$ a few years earlier
by Levi \cite{Levi55}. An equivalent formulation, in which the interior
of the convex body is replaced by smaller homothetic copies of it,
was independently posed by Gohberg and Markus \cite{GohMar60}. Other
equivalent formulations of this problem were posed by Hadwiger \cite{Hadwiger60}
and Boltyanski \cite{Bolt60} in terms of illuminating the boundary
of a convex body by outer light sources. For a comprehensive survey
of this problem and most of the progress made so far towards its solution
see e.g. \cite{Bezdek-Khan16,BrassMoser05,Naszodi2018}. 

Putting things formally, a subset of $\R^{n}$ is called a convex
body if it is a compact convex set with non-empty interior. The covering
number of a set $A\sub\R^{n}$ by a set $B\sub\R^{n}$ is given by
\[
N\left(A,B\right)=\min\left\{ N\in\mathbb{N}\,:\,\exists x_{1},\dots,x_{N}\in\R^{n}\text{ such that }A\sub\bigcup_{i=1}^{N}\left\{ x_{i}+B\right\} \right\} ,
\]
where $x+B=\left\{ x+b\,:\,b\in B\right\} $. Denoting the interior
of $B$ by ${\rm int}\,B$ and letting $\lambda B=\left\{ \lambda b\,:\,b\in B\right\} $
for $\lambda\in\R$, Hadwiger's conjecture states the following.
\begin{conjecture*}
Let $K\sub\R^{n}$ be a convex body. Then for some $0<\lambda<1$
one has $N\left(K,\lambda K\right)\le2^{n}$, or equivalently $N\left(K,{\rm int}\,K\right)\le2^{n}$.
Moreover, equality holds only if $K$ is an affine image of the $n$-cube. 
\end{conjecture*}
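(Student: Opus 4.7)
The plan is to combine the fractional covering machinery of \cite{ArtSlom14-FracCov} with an exponentially-sharp strengthening of the intersection-volume estimate for $K$ and $-K$, and then to complete the proof with a two-step stability argument for the equality case. The first step is the \emph{covering-to-intersection reduction}: the fractional covering argument of \cite{ArtSlom14-FracCov}, applied to a convex body $K$ with $\bry{K}=0$, bounds $N(K,\lambda K)$ from above by a polynomial in $n$ times $\vol{K-K}/\vol{K\cap(-K)}$. Combined with the Rogers--Shephard inequality $\vol{K-K}\le\binom{2n}{n}\vol{K}$ this reduces Hadwiger's conjecture to proving
\[
\vol{K\cap(-K)}\ \ge\ \frac{\binom{2n}{n}}{2^n}\cdot \frac{\vol{K}}{(n\ln n)^{O(1)}}.
\]

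The second step is a \emph{sharp intersection bound via exponential concentration}. I would place $K$ in isotropic position (both the covering number and the two sides of the displayed inequality are affine-invariant). Let $X$ be uniform on $K$. The key input would be a thin-shell bound of exponential rate,
\[
\PP\bigl(\bigl||X|-\sqrt{n}L_K\bigr|>t\sqrt{n}L_K\bigr)\le 2e^{-cnt},
\]
as opposed to the currently known sub-exponential bounds. Given such an estimate, a Prekopa--Leindler argument together with an annular decomposition should produce the required lower bound on $\vol{K\cap(-K)}$, essentially by showing that the overwhelming portion of the mass of $K$ lies in a thin sphere and that $K\cap(-K)$ contains a comparable portion of this shell.

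The third step handles the \emph{equality case}. A stable form of Rogers--Shephard, stating that $\vol{K-K}$ close to $\binom{2n}{n}\vol{K}$ forces $K$ close to a simplex, combined with a stable form of the intersection bound, stating that $\vol{K\cap(-K)}$ close to $4^{-n}\vol{K-K}$ forces $K$ close to a centrally symmetric body, would identify affine images of the cube as the sole extremizers, since these are the only convex bodies that are simultaneously simplex-like and symmetric up to affine equivalence.

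The dominant obstacle is the exponentially sharp thin-shell estimate in the second step: it is strictly stronger than the current state of the art and strictly stronger than any known consequence of the KLS or slicing conjectures. The methods of this paper deliver only the sub-exponential rate $e^{-c\sqrt{n}}$, which is precisely why the bound $\binom{2n}{n}e^{-c\sqrt{n}}$, rather than $2^n$, is stated in the abstract. Closing the remaining gap would require either a proof of the thin-shell conjecture at the exponential-tail level or a genuinely new decomposition that bypasses Rogers--Shephard altogether. The equality case is a second, essentially orthogonal obstacle, since it requires quantitative stability versions of both inequalities that are not currently available in the form needed to uniquely single out the cube.
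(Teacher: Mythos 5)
The statement you have been asked to prove is Hadwiger's conjecture itself, which is open; the paper does not prove it --- it only improves the known general upper bound to $c_{1}4^{n}e^{-c_{2}\sqrt{n}}$ --- so there is no proof to compare against, and your proposal, as you yourself acknowledge, does not close the argument either. Beyond the admitted reliance on an unproven exponential-rate thin-shell estimate, however, there are two concrete defects that would sink the plan even if that estimate were handed to you. First, your displayed target inequality is impossible: you ask for $\left|K\cap(-K)\right|\ge\frac{{2n \choose n}}{2^{n}}\cdot\frac{\left|K\right|}{(n\ln n)^{O(1)}}$, but ${2n \choose n}/2^{n}$ is of order $2^{n}/\sqrt{n}$, while trivially $\left|K\cap(-K)\right|\le\left|K\right|$; no convex body can satisfy this for large $n$. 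What the reduction actually requires is a lower bound of the form $\left|K\cap(-K)\right|/\left|K\right|\ge2^{-n}\cdot(\text{gain})$, and the largest conceivable gain, attained exactly when $K=-K$, is $2^{n}$ --- which is precisely what already yields the classical $2^{n}(n\ln n+\dots)$ bound in the centrally symmetric case.

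Second, and more fundamentally, the covering-to-intersection reduction is intrinsically too lossy to reach $2^{n}$: the fractional bound carries a factor $\left(\frac{1+\lambda}{\lambda}\right)^{n}\to2^{n}$ plus logarithmic terms from the fractional-to-integral passage, so even inserting the \emph{true} value of $\left|K\cap(-K)\right|/\left|K\right|$ the route bottoms out at roughly $2^{n}n\ln n$ for symmetric bodies, and at roughly $e^{n}$ (up to polynomial factors) for the simplex, whose K\"ovner--Besicovitch constant is conjectured to be of order $(2/e)^{n}$ --- yet the simplex satisfies $N(K,{\rm int}\,K)=n+1$. This shows the volumetric framework overestimates by an exponential factor on at least some bodies, so no sharpening of its ingredients (thin-shell, Rogers--Shephard stability, or otherwise) can deliver the conjectured $2^{n}$, and a fortiori cannot deliver the equality characterization, which would additionally require stability theorems that do not exist in any form. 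Any genuine attack on the conjecture must depart from this framework rather than refine it.
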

The currently best general upper bound known for $n\ge3$ is ${2n \choose n}\left(n\ln n+n\ln\ln n+5n\right)$,
while the best bound for centrally-symmetric convex bodies (i.e. convex
bodies $K$ satisfying $K=-K$) is $2^{n}\left(n\ln n+n\ln\ln n+5n\right)$.
Both bounds are simple consequences of Rogers' estimates \cite{Rogers57}
for the asymptotic lower density of a covering of the whole space
by translates of a general convex body, combined with the Rogers-Shephard
inequality \cite{RogShep57}, as can be seen in \cite{Erdos-Rogers64}
and \cite{RogersZong}. For results in small dimensions, see \cite{Bez1, BLNP, BiFo, Bolt1, Dek, IvSt, Las, Pap, PyrmakShepelska18}. We also mention in passing that Hadwiger's conjecture has been confirmed for certain classes of convex bodies such as constant width and fat spindle bodies (see \cite{Bez2, Sch}), belt bodies (see \cite{BoMa1, BoMa2, BoMaHo, Bol2, Ma}), bodies of Helly dimension $2$ (see \cite{Bolt1}), dual cyclic polytopes (see \cite{BeBi, Ta}). We refer to the aforementioned surveys for a detailed account.

A fractional version of the illumination problem was considered by
Nasz\'{o}di \cite{Naszodi09}, where the upper bounds of $2^{n}$ for
the centrally-symmetric case, and ${2n \choose n}$ for the general
case were obtained. The same bounds, as well as the extremity of the
$n$-cube in the centrally-symmetric case, were established by Artstein-Avidan
and the second named author in \cite{ArtSlom14-FracCov} by considering
fractional covering numbers of convex bodies. Moreover, together with
an inequality linking integral covering numbers and fractional covering
numbers (see Section \ref{sec:Hadwiger} below), the aforementioned
best known upper bounds for Hadwiger's classical problem were recovered
(technically, only the bound in the centrally-symmetric case was explicitly
recovered, but the proof of the general bound is almost verbatim the
same). These bounds were recovered once more in \cite{LivshytsTikhomirov16}.
For additional recent results on Hadwiger's problem, see \cite{LivshytsTikhomirov17, Tikhomirov16}, and references therein.

\subsubsection{\textbf{Main results }}

We combine ideas from \cite{ArtSlom14-FracCov} with a new result
on the K\"{o}vner-Besicovitch measure of symmetry for convex bodies, which
we discuss in Section \ref{sec:KB}. As a result, we obtain a new
general upper bound for Hadwiger's problem:
\begin{thm}
\label{thm:Hadwiger}There exist universal constants $c_{1},c_{2}>0$
such that for all $n\ge2$ and every convex body $K\sub\R^{n}$, one
has 
\[
N\left(K,{\rm int}K\right)\le c_{1}4^{n}e^{-c_{2}\sqrt{n}}.
\]
\end{thm}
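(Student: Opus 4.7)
\textbf{Proof plan for Theorem \ref{thm:Hadwiger}.}

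The plan is to couple the fractional-covering machinery of Artstein-Avidan and Slomka~\cite{ArtSlom14-FracCov} with a sub-exponential improvement of the K\"ovner-Besicovitch measure of symmetry $\kb{K}$, the latter obtained via thin-shell concentration for isotropic log-concave measures.

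The first step is the fractional-to-integer reduction of~\cite{ArtSlom14-FracCov}: a bound on the fractional covering number $N^{*}(K,{\rm int}\,K)$, expressible in terms of volume ratios between $K$, its difference body $K-K$, and its symmetric overlap body $K\cap(-K+x)$, converts to an integer covering bound at the cost of a polynomial-in-$n$ factor of order $1+\log N^{*}$. In its current form, together with the Rogers-Shephard inequality $\vol{K-K}\leq\binom{2n}{n}\vol{K}$, this scheme reproduces the historical estimate of order $\binom{2n}{n}n\ln n$. Crucially, any sub-exponential improvement of the classical lower bound on the K\"ovner-Besicovitch symmetry measure $\kb{K}\cdot\vol{K}=\max_{x}\vol{K\cap(-K+x)}$ propagates through this scheme directly to the final estimate.

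The main new ingredient is exactly such an improvement. Concretely, I would establish
\[
\max_{x\in\R^{n}}\vol{K\cap(-K+x)}\;\geq\;2^{-n}e^{c\sqrt n}\,\vol{K},
\]
with the same estimate also valid at $x=0$ whenever $K$ has its barycenter at the origin, improving the classical lower bound of $2^{-n}\vol{K}$ (Milman-Pajor, or equivalently a pigeonhole averaging over $K+K=2K$) by a sub-exponential factor. The strategy is to rewrite $\vol{K\cap(-K+x)}$ as the autoconvolution $(\one_{K}*\one_{-K})(x)$, a log-concave function on $\R^{n}$ supported on $K-K$ and maximized near the origin. Placing the underlying density into isotropic position and invoking a thin-shell inequality for isotropic log-concave measures, most of the mass of this convolution sits in a narrow annulus around a typical radius; comparing the average value of the convolution on that annulus to its peak at the barycenter then forces a sub-exponential boost of the peak value. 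I expect this concentration-to-volume step to be the main obstacle, as known thin-shell inequalities control only polynomially the shell width, so upgrading them to a genuine $e^{c\sqrt n}$ factor on an $n$-dimensional volume will require a careful decomposition of $K-K$ into a thin annulus plus a concentrated core, combined with Rogers-Shephard-type estimates inside the core.

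Combining these two ingredients with the asymptotic $\binom{2n}{n}\sim 4^{n}/\sqrt{\pi n}$ then yields
\[
N(K,{\rm int}\,K)\;\lesssim\;\binom{2n}{n}\,e^{-c\sqrt n}\,{\rm poly}(n)\;\leq\;c_{1}4^{n}e^{-c_{2}\sqrt n},
\]
with $c_{2}$ slightly smaller than $c$ to absorb the polynomial factors.
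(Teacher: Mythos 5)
Your overall architecture matches the paper's: bound the fractional covering number $N_{\omega}(K,\lambda K)$ by a volume ratio involving the symmetric intersection $K\cap(x-K)$, convert to an integral covering bound at the cost of a logarithmic factor via $N(K,T_{1}+T_{2})\le N_{\omega}(K,T_{1})(1+\ln\overline{N}(K,T_{2}))$, and feed in a lower bound $\kb K\ge2^{-n}e^{c\sqrt{n}}$; this is exactly how Theorem \ref{thm:Hadwiger} is deduced (with Milman--Pajor handling the logarithmic term, and no need for Rogers--Shephard, since the factor $4^{n}$ arises as $\left(\frac{1+\alpha}{\alpha}\right)^{n}2^{n}$ with $\alpha=1-1/n$). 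The gap is in the one genuinely new ingredient, which you do not prove: the sub-exponential improvement of the K\"ovner--Besicovitch bound. Your sketch --- put the autoconvolution $\one_{K}*\one_{K}$ itself into isotropic position, and compare its average on its own thin shell to its peak --- is not the mechanism that works, and you yourself flag the obstruction: thin-shell inequalities only control the shell width polynomially, so no $e^{c\sqrt{n}}$ volume gain comes out of that comparison. Proposing to fix this with ``a careful decomposition of $K-K$ into a thin annulus plus a concentrated core, combined with Rogers--Shephard-type estimates'' is not a proof, and it is not clear such a route can succeed.

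The idea that closes this gap in the paper is a comparison of two \emph{different} distributions rather than one distribution against itself. With $X,Y$ independent and uniform on $K$ in isotropic position, the law of $\frac{X+Y}{2}$ is log-concave, supported on $K$, and isotropic up to scaling with covariance $\frac{1}{2}L_{K}^{2}Id$; hence its mass concentrates on a shell of radius $L_{K}\sqrt{n/2}$, while $X$ concentrates on a shell of radius $L_{K}\sqrt{n}$. For an intermediate radius $rL_{K}\sqrt{n}$ with $\frac{1}{\sqrt{2}}<r<1$ one has
\[
\norm g_{\infty}\ge\frac{\PP\left(\norm{\tfrac{X+Y}{2}}_{2}\le rL_{K}\sqrt{n}\right)}{\PP\left(\norm X_{2}\le rL_{K}\sqrt{n}\right)},
\]
where $g$ is the density of $\frac{X+Y}{2}$; the numerator is $1-e^{-c'\sqrt{n}}$ and the denominator is at most $e^{-c'\sqrt{n}}$ by the Gu\'edon--E.~Milman deviation estimate applied at a \emph{fixed} relative deviation $t=\frac{\sqrt{2}-1}{\sqrt{2}+1}$ --- the exponent $\sqrt{n}$ comes from the $n^{\alpha/2}$ dependence for $\psi_{1}$ log-concave vectors, not from the width of any shell. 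This gives $\norm g_{\infty}\ge e^{c\sqrt{n}}$ and hence $\kb K=2^{-n}\norm g_{\infty}\ge2^{-n}e^{c\sqrt{n}}$, which is precisely the input your plan assumes. Without this (or an equivalent) argument, your proposal reduces the theorem to an unproved claim whose suggested proof strategy is the part most likely to fail.
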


For $\psi_{2}$ bodies (for definitions and more details see Section
\ref{sec:double-thin-shell} below), we obtain the following exponential
improvement: 
\begin{thm}
\label{thm:Hadwiger-psi2} Let $K\sub\R^{n}$ be a convex body with
barycenter at the origin which is $\psi_{2}$ with constant $b_{2}>0$.
Then 
\[
N\left(K,{\rm int}K\right)\le c_{1}4^{n}e^{-c_{2}b_{2}^{-2}n}.
\]
\end{thm}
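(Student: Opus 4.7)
The plan is to follow the blueprint of the proof of Theorem \ref{thm:Hadwiger}, replacing the Paouris-type log-concave large-deviation inequality (which drives the improvement factor $e^{-c\sqrt{n}}$ in the general case) by the sharper sub-Gaussian-type large-deviation inequality valid for $\psi_{2}$ convex bodies. This substitution should upgrade the exponent in the improvement factor from $\sqrt{n}$ to $n/b_{2}^{2}$ without further structural change.

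First, I would invoke the fractional covering framework of Section \ref{sec:Hadwiger} (originally due to Artstein-Avidan and Slomka in \cite{ArtSlom14-FracCov}), which reduces the problem of upper bounding $N(K, \mathrm{int}\, K)$ to that of lower bounding the K\"ovner-Besicovitch measure of symmetry $\kb{K}$. Concretely, it suffices to establish a bound of the form
\[
\kb{K} \;\ge\; c_{3} \cdot 2^{-n} \cdot \exp\!\left( c_{4}\, n / b_{2}^{2} \right) ,
\]
since plugging this into the machinery used already in the proof of Theorem \ref{thm:Hadwiger} yields the claimed estimate $N(K, \mathrm{int}\, K) \le c_{1}\, 4^{n}\, e^{-c_{2}\, n/b_{2}^{2}}$.

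Since $K$ has its barycenter at the origin, Section \ref{sec:KB} identifies $\kb{K}$ with $\vol{K \cap (-K)}/\vol{K}$, so it remains to lower bound this last ratio. To this end I would apply a linear map placing $K$ in isotropic position: this preserves $N(K,\mathrm{int}\,K)$, the barycenter condition, and $\kb{K}$, while altering the $\psi_{2}$ constant only by a universal multiplicative factor. In this position, the random vector $X$ uniform on $K$ satisfies $\EE |X|^{2}=n$ together with the sub-Gaussian thin-shell estimate
\[
\PP\!\left( \bigl| |X| - \sqrt{n} \bigr| > t \right) \;\le\; C \exp\!\left( - c\, t^{2}/b_{2}^{4} \right) \qquad (t>0) ,
\]
a standard consequence of $\norm{\iprod{X}{\theta}}_{\psi_{2}} \le b_{2}$ for every $\theta \in \Sph^{n-1}$ combined with the subexponential concentration of $|X|^{2} = \sum_{i} \iprod{X}{e_{i}}^{2}$. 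This estimate is exponentially sharper in $n$ than the Paouris-type concentration used in the proof of Theorem \ref{thm:Hadwiger}.

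The final and most delicate step is converting this sharp thin-shell into the desired lower bound on $\vol{K \cap (-K)}/\vol{K}$. I would follow the polar-coordinate argument from the general case: expanding
\[
\vol{K} = \frac{1}{n}\int_{\Sph^{n-1}} \rho_{K}(\theta)^{n} \d\theta , \qquad \vol{K \cap (-K)} = \frac{1}{n}\int_{\Sph^{n-1}} \min\{ \rho_{K}(\theta),\, \rho_{K}(-\theta)\}^{n} \d\theta ,
\]
restricting the integration to directions $\theta$ on which both $\rho_{K}(\theta)$ and $\rho_{K}(-\theta)$ stay close to $\sqrt{n}$, and controlling the relative spherical measure of this good set of directions by means of the $\psi_{2}$ tail estimate above (while invoking the barycenter condition to rule out any gross asymmetry). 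The main obstacle is verifying quantitatively that this substitution really does propagate the improvement: that moving from the log-concave exponent $\sqrt{n}$ to the $\psi_{2}$ exponent $n/b_{2}^{2}$ in the deviation bound translates linearly to the corresponding gain in the exponent of the bound on $\kb{K}$. Once this propagation is checked, Theorem \ref{thm:Hadwiger-psi2} follows at once.
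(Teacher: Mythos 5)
Your reduction step is fine and is exactly what the paper does: the proof of Theorem \ref{thm:Hadwiger-psi2} in the paper is verbatim the proof of Theorem \ref{thm:Hadwiger} (the fractional-covering machinery of Section \ref{sec:Hadwiger}: Lemma \ref{lem:vol_Nw}, Lemma \ref{fact:vol_bd} and (\ref{eq:N_vs_N*})), with Corollary \ref{cor:psi_2} used in place of Theorem \ref{thm:main}. The problem is that the entire content of the theorem lies in that substitute input, namely the bound $\kb K\ge 2^{-n}\exp\left(c\,b_{2}^{-2}n\right)$, and this is precisely what your sketch does not actually prove. There is no ``polar-coordinate argument from the general case'' to follow: the paper proves the general bound by comparing the thin shells of $X$ and $\frac{X+Y}{2}$ (Proposition \ref{prop:KB_psi}) or by an entropy argument (Proposition \ref{prop:MP_psi}), not by radial functions.

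Concretely, your conversion step rests on a false premise. Thin-shell concentration of $\norm X_{2}$ is a statement about the volume-weighted (uniform-on-$K$) distribution; it does not imply that $\rho_{K}(\theta)$ is close to $\sqrt{n}L_{K}$ for most $\theta$ in spherical measure. The isotropic cube, which is $\psi_{2}$ with a universal constant, already defeats this: for typical $\theta$ one has $\rho_{K}(\theta)$ of order $\sqrt{n/\log n}\,L_{K}$, so your ``good set of directions'' has vanishing spherical measure, and in any case you never indicate how comparing $\min\left(\rho_{K}(\theta),\rho_{K}(-\theta)\right)^{n}$ with $\rho_{K}(\theta)^{n}$ would produce the specific factor $2^{-n}e^{cn/b_{2}^{2}}$. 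Moreover, the deviation inequality you invoke is not a ``standard consequence'' of sub-Gaussian marginals plus concentration of $\sum_{i}\iprod X{e_{i}}^{2}$, because the coordinates of $X$ are not independent; the correct input is the Gu\'{e}don--E.~Milman estimate, Theorem \ref{thm:GM2011} with $\alpha=2$. The missing idea is the mechanism that turns thin-shell into a volume bound: either note that $\frac{X+Y}{2}$ is log-concave with covariance $\frac{1}{2}L_{K}^{2}Id$, hence concentrates at radius $\sqrt{n/2}\,L_{K}$ while $X$ concentrates at $\sqrt{n}\,L_{K}$, and bound $\norm g_{\infty}$ (where $g$ is the density of $\frac{X+Y}{2}$, so $\kb K=2^{-n}\norm g_{\infty}$) from below by the ratio of the two probabilities of a Euclidean ball of intermediate radius $r L_K\sqrt n$ with $\frac{1}{\sqrt 2}<r<1$; or run the entropy/Jensen argument of Section \ref{sec:entropy} with the weight $h(x)=e^{-\lambda\norm x_{2}^{2}}\one_{2K}$. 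Without one of these (or a genuinely new) mechanisms, the claimed lower bound on $\kb K$, and hence the theorem, is not established.
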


\subsection{\label{sec:KB}The K\"{o}vner-Besicovitch measure of symmetry}

Denote the family of all convex bodies in $\R^{n}$ by $\kn$. Denote
the Lebesgue volume of a measurable set $A\sub\R^{n}$ by $\left|A\right|$. 

Let $K\sub\R^{n}$ be a convex body. Given a point $x\in\R^{n}$,
let us call here the set $\left(K-x\right)\cap\left(x-K\right)$ the
\textit{symmetric intersection} of $K$ \textit{at} $x$. As defined
by Gr\"{u}nbaum \cite{Grunbaum63}, the following is a measure of symmetry
for $K$, referred to as the K\"{o}vner-Besicovitch measure of symmetry:
\[
\kb K=\max_{x\in\R^{n}}\frac{\left|\left(K-x\right)\cap\left(x-K\right)\right|}{\left|K\right|}=\max_{x\in\R^{n}}\frac{\left|K\cap\left(x-K\right)\right|}{\left|K\right|}.
\]
To study this quantity, throughout this paper, we use the fact that
the volume of the symmetric intersection of a convex body at a point
$x$ is the same as its convolution square at $2x$, i.e., the convolution
relation 
\[
|(K-x)\cap\left(x-K\right)|=\left|K\cap\left(2x-K\right)\right|=(\one_{K}*\one_{K})\left(2x\right),
\]
where $\one_{K}$ is the indicator function of $K$. Combining this
with the fact that the support of $\one_{K}*\one_{K}$ is $2K$, one
easily obtains by integration that 
\begin{equation}
\min_{K\in\kn}\kb K\ge2^{-n}.\label{eq:KB_bound}
\end{equation}

Denote by ${\rm b}(K)$ the barycenter of $K.$ By fixing this as
the point of reference, one may consider the volume ratio of the symmetric
intersection of $K$ at its barycenter as another measure of symmetry
for $K$. A result of V. Milman and Pajor \cite{PajorMilman2000}
tells us that 
\begin{equation}
\frac{\left|(K-{\rm b}(K))\cap({\rm b}(K)-K)\right|}{\left|K\right|}\ge2^{-n}.\label{eq:MP}
\end{equation}

The optimal lower bound, in both instances, is not known and conjectured
to be attained by the simplex, which would imply a lower bound of
the order of $\left(\frac{2}{e}\right)^{n}$ (see e.g. \cite{Grunbaum63},
\cite{Taschuk15} for more details).

\subsubsection{\textbf{A new lower bound}}

Our second goal in this note is to improve both (\ref{eq:KB_bound})
and (\ref{eq:MP}). We consider two approaches, both of which involve
using the property of a (properly normalized) log-concave measure
to concentrate in a thin-shell, and in particular a quantitative form
of it by Gu\'{e}don and E. Milman \cite{GuedonMilman2011}. More precisely,
let $X$ and $Y$ be independent random vectors, uniformly distributed
on a convex body $K\sub\R^{n}$. Our first approach is based on the
comparison of the measure of a ball, whose boundary is between the
two thin shells around which the distributions of $X$ and $\text{\ensuremath{\left(X+Y\right)}/2}$
are concentrated, according to each of these measures; this leads
to the improvement of (\ref{eq:KB_bound}).

The second approach, which allows us to bound the volume of the symmetric
intersection of $K$ at its barycenter and to improve (\ref{eq:MP}),
combines the above mentioned thin-shell estimates of Gu\'{e}don and E.
Milman with the notion of entropy. Given that there is not much reason
to believe our bounds are optimal, we have chosen to present both
approaches since either might have the potential to give further improvements.

To turn to details, we prove the following:
\begin{thm}
\label{thm:main}For some universal constant $c>0$, we have 
\[
\min_{K\in\kn}\kb K\geq\min_{K\in\kn\,:\,{\rm b}(K)=0}\frac{\left|K\cap(-K)\right|}{|K|}\ge\frac{\exp\left(cn^{1/2}\right)}{2^{n}}.
\]
\end{thm}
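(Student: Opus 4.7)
The plan is to translate the desired volumetric lower bound into a differential-entropy inequality for a certain log-concave random vector, and then apply thin-shell concentration. The first inequality in the statement is an immediate consequence of the affine invariance of $\Delta_{KB}$ and the fact that $|K \cap (-K)|/|K|$ (for $b(K)=0$) is at most $\Delta_{KB}(K)$, so the substance lies in the second bound.

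First I would use affine invariance: the ratio $|K \cap (-K)|/|K|$ is invariant under linear transformations and the condition $b(K)=0$ is preserved, so I may assume $K$ is in isotropic position. Let $X$ be uniform on $K$ (so $\mathbb{E}[X]=0$ and $\mathrm{Cov}(X)=I$), let $Y$ be an independent copy of $X$, and set $Z=(X+Y)/2$. The convolution identity from the introduction gives $f_Z(0) = 2^n\, |K \cap (-K)|/|K|^2$, so the target rewrites as $|K|\, f_Z(0) \geq e^{c\sqrt n}$. Since $Z$ has log-concave density and barycenter $0$, Jensen's inequality applied to the concave function $\log f_Z$ yields $f_Z(0) \geq e^{-h(Z)}$, where $h$ denotes differential entropy. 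Using $h(X) = \log|K|$, the task reduces to proving
\[
h(X) - h(Z) \geq c\sqrt n.
\]

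The Entropy Power Inequality already yields $h(X) - h(Z) \leq (n/2)\log 2$, with equality only in the Gaussian limit. The main task is therefore to strengthen this bound by a genuine $c\sqrt n$ gap, using the log-concavity and isotropy of $X$. Here I would invoke the Gu\'edon--Milman thin-shell estimate, under which both $X$ and $\sqrt{2}\,Z$ (which is isotropic log-concave because $\mathrm{Cov}(\sqrt{2}Z) = I$) are concentrated on thin shells of radius $\sqrt n$ with relative width vanishing polynomially in $n$.

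A natural implementation uses the polar-coordinate decomposition
\[
h(W) = h(|W|) + h\bigl(W/|W|\,\big|\,|W|\bigr) + (n-1)\,\mathbb{E}[\log|W|],
\]
applied separately to $X$ (with $|X|\approx\sqrt n$) and to $Z$ (with $|Z|\approx\sqrt{n/2}$). The leading term $(n-1)\,\mathbb{E}[\log(|X|/|Z|)]$ contributes essentially $(n/2)\log 2$; the two radial entropies are each of order at most $\log n$ by the Gaussian maximum-entropy bound in one dimension combined with the thin-shell variance estimate; and the angular entropies $h(X/|X|\mid |X|)$, $h(Z/|Z|\mid |Z|)$ are each close to the common maximum $\log\sigma(\mathbb{S}^{n-1})$ by the isotropy constraint. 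The hardest step will be a quantitatively sharp estimate of the angular-entropy difference, since isotropy alone only matches second moments on the sphere and so the two angular entropies need not be comparable for free; my plan is to use thin-shell-type concentration of bilinear functionals on the sphere to bound this difference by $o(\sqrt n)$. Once this is handled, combining the three polar pieces delivers $h(X) - h(Z) \geq (n/2)\log 2 - o(\sqrt n) \geq c\sqrt n$, which completes the proof via the Jensen chain above.
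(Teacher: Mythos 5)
Your opening reduction is exactly the one the paper uses: by Jensen applied to the log-concave density of $Z=\tfrac{X+Y}{2}$ at its barycenter, the claim becomes the entropy gap $\ent[X]-\ent[Z]\ge c\sqrt{n}$, which is the paper's inequality (\ref{eq:ent-app1}) in disguise. The gap in your proposal is in how you then establish this entropy inequality. Your polar-coordinate plan hinges on the claim that the angular-entropy difference $h\!\left(X/\|X\|_2\,\big|\,\|X\|_2\right)-h\!\left(Z/\|Z\|_2\,\big|\,\|Z\|_2\right)$ is $o(\sqrt{n})$ ``by the isotropy constraint'', so that the radial leading term $\approx\tfrac{n}{2}\ln 2$ survives and you conclude $\ent[X]-\ent[Z]\ge\tfrac{n}{2}\ln 2-o(\sqrt{n})$. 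That intermediate conclusion is false, so the angular claim cannot be repaired: take $K=[-\tfrac12,\tfrac12]^n$. Then $\ent[X]=0$, each coordinate of $X+Y$ has the triangular density with entropy $\tfrac12$, hence $\ent[X]-\ent[Z]=n\ln 2-\tfrac{n}{2}\approx 0.193\,n$, which is smaller than $\tfrac{n}{2}\ln 2\approx 0.347\,n$ by a term linear in $n$. Since for the cube the term $(n-1)\,\mathbb{E}\ln\left(\|X\|_2/\|Z\|_2\right)$ is indeed $\tfrac{n}{2}\ln 2+O(1)$ and the two radial entropies are $O(\log n)$ in absolute value, the angular-entropy difference is in fact $\approx -0.15\,n$, of order $n$, not $o(\sqrt{n})$: isotropy only fixes second moments of the direction and does not push the angular entropy near $\log\sigma(\mathbb{S}^{n-1})$. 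A secondary problem is that even your leading radial term needs $\mathbb{E}\ln\|X\|_2=\ln\sqrt{n}+o(n^{-1/2})$, i.e.\ thin-shell width $o(n^{-1/2})$ in an averaged sense, far beyond the Gu\'edon--Milman estimate you invoke (whose shell width is only of order $n^{-1/6}$). In effect your plan would prove near-equality in the entropy power inequality for all uniform distributions on convex bodies, which is not true.

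For comparison, the paper proves the entropy gap without any decomposition: in isotropic position it bounds $\ent[X+Y]$ from above by the variational inequality (\ref{eq:lem-jensen1}) with the test function $h(x)=e^{-\lambda\|x\|_2^2}\one_{2K}(x)$. The only thin-shell input needed is the crude lower-deviation fact that $\left|\left\{x\in K:\|x\|_2\le(1-t)\sqrt{n}L_K\right\}\right|\le C\exp\left(-c'\min(t^3,t)\sqrt{n}\right)$ for a single fixed $t$, which controls $\int_K e^{-4\lambda\|x\|_2^2}\,dx$ after splitting at radius $(1-t)\sqrt{n}L_K$; choosing $\lambda$ of order $n^{-1/2}L_K^{-2}$ then gives precisely a $c\sqrt{n}$ gain over $n\ln 2$. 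If you want to pursue your route, you must aim only for a $c\sqrt{n}$ gap (not $\tfrac{n}{2}\ln 2$) and find a mechanism that does not require the angular term to be negligible; as written, the key step is not merely hard but false.
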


Theorem \ref{thm:main} is a particular consequence of Propositions
\ref{prop:KB_psi} and \ref{prop:MP_psi} below, which provide a lower
bound for $\kb K$ and $|K\cap(-K)|/|K|$ by taking into account the
$\psi_{\alpha}$ behavior of the convex body $K$ (for definitions
and more details see Section \ref{sec:double-thin-shell} below).
In particular, for $\psi_{2}$ bodies, we have the following exponential
improvement of (\ref{eq:KB_bound}) and \ref{eq:MP}.
\begin{cor}
\label{cor:psi_2}(of Propositions \ref{prop:KB_psi} and \ref{prop:MP_psi})
Let $K\in\R^{n}$ be a convex body centered at the origin which is
$\psi_{2}$ with constant $b_{2}>0$. Then 
\[
\kb K\geq\frac{\left|K\cap(-K)\right|}{|K|}\ge\frac{\exp\left(cb_{2}^{-2}n\right)}{2^{n}}.
\]
\end{cor}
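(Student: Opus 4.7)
The plan is to read Corollary \ref{cor:psi_2} as the $\alpha=2$ specialization of Propositions \ref{prop:KB_psi} and \ref{prop:MP_psi}. The first inequality $\kb K\ge|K\cap(-K)|/|K|$ is immediate from the definition of $\kb K$, since the choice $x=0$ in the maximum over $\R^n$ already yields this particular ratio. The substantive content is the second inequality, which is Proposition \ref{prop:MP_psi} applied with $\alpha=2$, using the hypothesis that the $\psi_2$ constant of $K$ equals $b_2$.

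For the underlying Proposition \ref{prop:MP_psi}, I would follow the entropy-based second approach advertised in Section \ref{sec:KB}. One may assume $K$ is isotropic; let $X,Y$ be i.i.d.\ uniform on $K$. The convolution identity $|K\cap(-K)|=(\one_K*\one_K)(0)=|K|^2 f_{X+Y}(0)$, rescaled from $X+Y$ to $(X+Y)/2$, reads
\[
\frac{|K\cap(-K)|}{|K|}=\frac{|K|\cdot f_{(X+Y)/2}(0)}{2^n},
\]
so the task reduces to proving $|K|\cdot f_{(X+Y)/2}(0)\ge\exp(cn/b_2^2)$. Because $K$ is centered, the log-concave density $f_{(X+Y)/2}$ has barycenter $0$; Jensen's inequality, applied to the convex function $-\log f_{(X+Y)/2}$ and the mean-zero random vector $(X+Y)/2$, gives
\[
-\log f_{(X+Y)/2}(0)\le h\bigl((X+Y)/2\bigr),
\]
where $h$ denotes differential entropy. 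Since $h(X)=\log|K|$, the task is further reduced to the entropy gap $h(X)-h\bigl((X+Y)/2\bigr)\ge c\,n/b_2^2$.

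This is where the $\psi_2$ thin-shell estimate enters. The Gu\'edon--E.~Milman thin-shell, in its $\psi_2$ sharpening, pins $|X|$ in an annulus of scale $b_2$ around $\sqrt n$, while $|(X+Y)/2|$ concentrates around the strictly smaller radius $\sqrt{n/2}$. My plan is to upper-bound $h((X+Y)/2)$ in terms of $\log|K\cap R\uball|$, for $R$ chosen strictly between $\sqrt{n/2}$ and $\sqrt n$ (say $R^2=3n/4$), using the $\psi_2$ concentration of $(X+Y)/2$ to control the error; the thin-shell applied to $X$ then gives $|K\cap R\uball|=|K|\cdot\PP(X\in R\uball)\le|K|\exp(-cn/b_2^2)$, delivering the required gap. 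Equivalently, one can run the first, comparison-of-measures approach from Section \ref{sec:KB}: the same ball satisfies $\PP(X\in R\uball)\le\exp(-cn/b_2^2)$ while $\PP((X+Y)/2\in R\uball)\ge\frac12$, so some point in $K\cap R\uball$ must carry density $f_{(X+Y)/2}\ge\exp(cn/b_2^2)/(2|K|)$, which is then transferred to $0$ by log-concavity.

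The hard part is this transfer step: passing from a lower bound on $f_{(X+Y)/2}$ somewhere on the inner shell to a lower bound at the origin without sacrificing the $\exp(cn/b_2^2)$ gain. The crude Fradelizi estimate $f(0)\ge e^{-n}\|f\|_\infty$ costs a factor $e^n$, which retains a net improvement only when $b_2$ is small enough for the thin-shell gain to dominate $n$; the Jensen/entropy route above avoids this cost, at the price of requiring a matching sharp \emph{upper} bound on $h((X+Y)/2)$ derived from the $\psi_2$ concentration. Calibrating these estimates against one another is the technical heart of Propositions \ref{prop:KB_psi} and \ref{prop:MP_psi}.
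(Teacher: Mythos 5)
Your proposal is correct and follows essentially the paper's own route: the corollary is read exactly as the paper intends, namely the trivial inequality $\kb K\ge\left|K\cap(-K)\right|/|K|$ (take $x=0$) together with Proposition \ref{prop:MP_psi} at $\alpha=2$, and your sketch of that proposition is the paper's entropy argument --- Jensen at the barycenter of the log-concave density of $X+Y$ reduces everything to the entropy gap $\ent[X+Y]-\ent[X]$, which is then controlled by the Gu\'edon--E.~Milman thin-shell estimate. The only (immaterial) difference is the auxiliary function in the entropy upper bound: you split the entropy over the ball $K\cap R\uball$ and its complement (a two-level test function), whereas the paper uses $h(x)=\exp(-\lambda\norm x_{2}^{2})\one_{2K}$ and splits the resulting integral over the very same inner ball, so the mechanics coincide; you also rightly discard the ``transfer by log-concavity'' variant, which the paper never uses for this statement.
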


\subsection{Positive modulus of convexity }

The modulus of convexity of a centered convex body $K\sub\R^{n}$
is defined by
\[
\delta_{K}\left(\eps\right)=\inf\left\{ 1-\norm{\frac{x+y}{2}}_{K}\st\norm x_{K},\norm y_{K}\leq1,\,\norm{x-y}_{K}\ge\eps\right\} ,
\]
where $\norm x_{K}=\inf\left\{ r>0\st x\in rK\right\} $ is the gauge
function of $K$. We say that $K$ is uniformly convex if $\delta_{K}\left(\eps\right)>0$
for all $0<\eps<2$. Note that in the finite--dimensional case, $K\sub\R^{n}$
is strictly convex (i.e. the boundary of $K$ contains no line segments)
if and only if it is uniformly convex.

Using a different concentration result of Arias-De-Reyna, Ball, and
Villa \cite{arias-de-reyna_ball_villa98}, which was generalized by
Gluskin and Milman \cite{GluskinMilman2004}, we extend Theorems \ref{thm:Hadwiger-psi2}
and \ref{cor:psi_2} to the class of convex bodies whose modulus of
convexity is positive for some $0<\eps<\sqrt{2}$. More precisely,
for $0<r<1$ and $0<\eps<\sqrt{2}$, let $\mathcal{K}_{n,r,\eps}$
be the class of centered convex bodies $K\sub\R^{n}$ for which $\delta_{K}\left(\eps\right)\ge r$. 
\begin{thm}
\label{thm:uniform_convex}Let $0<r<1$, $0<\eps<\sqrt{2}$, and let
$K\in\mathcal{K}_{n,r,\eps}$. Then, for $\alpha:=1-\exp\left(-\frac{\left(\sqrt{2}-\eps\right)^{2}}{4}n\right)$,
we have

\begin{gather*}
\kb K\geq\alpha\,2^{-n}\left(\frac{1}{1-r}\right)^{n},\quad\frac{\left|K\cap(-K)\right|}{|K|}\ge\frac{1}{e\sqrt{n}}\,2^{-n}\left(\frac{1}{1-\alpha r}\right)^{n}.
\end{gather*}
\end{thm}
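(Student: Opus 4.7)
The plan is to follow the strategy used for Propositions \ref{prop:KB_psi} and \ref{prop:MP_psi}, substituting the thin-shell input with the concentration estimate of Arias-De-Reyna, Ball, and Villa (extended by Gluskin and Milman): for $X,Y$ independent and uniform on a centered convex body $K\sub\R^n$ and every $0<\eps<\sqrt 2$,
\[
\PP\left(\norm{X-Y}_K\ge\eps\right) \,\ge\, \alpha \,=\, 1-\exp\!\left(-\tfrac{(\sqrt 2-\eps)^2}{4}\,n\right).
\]
On the event $\{\norm{X-Y}_K\ge\eps\}$ the hypothesis $\delta_K(\eps)\ge r$ forces $\norm{(X+Y)/2}_K\le 1-r$, while on the complementary event only the trivial bound $\norm{(X+Y)/2}_K\le 1$ is available. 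Setting $Z=(X+Y)/2$, whose density $g$ is log-concave, supported in $K$, and has barycenter at the origin, these combine to give the two basic estimates
\[
\PP(Z\in(1-r)K)\,\ge\,\alpha \qquad\text{and}\qquad \EE\left[\norm Z_K\right] \,\le\, \alpha(1-r)+(1-\alpha) \,=\, 1-\alpha r.
\]

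For the K\"ovner--Besicovitch bound, I would pair the first estimate with the convolution identity $|K\cap(2x-K)|/|K|=2^{-n}|K|\,g(x)$ used throughout the paper: this gives
\[
\alpha \,\le\, \int_{(1-r)K} g(z)\,\d z \,\le\, (1-r)^n|K|\,\norm g_\infty,
\]
and maximizing over $x$ yields $\kb K\ge\alpha\cdot 2^{-n}(1/(1-r))^n$.

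For the lower bound on $|K\cap(-K)|/|K|$, the plan is to combine Jensen's inequality with a maximum-entropy estimate. By log-concavity of $g$ and $\EE Z=0$, Jensen applied to the concave function $\log g$ gives
\[
g(0)\,=\,g(\EE Z)\,\ge\,\exp\!\left(-h(g)\right), \quad h(g):=-\int g\log g.
\]
Among all probability densities $f$ on $\R^n$ with $\int\norm z_K f(z)\,\d z\le m$, the entropy is maximized by the gauge-exponential $g_\lambda(z)=(\lambda^n/(n!|K|))e^{-\lambda\norm z_K}$, with $\lambda=n/m$; a short calculation (using $\int_{\R^n}e^{-\lambda\norm z_K}\d z=n!|K|/\lambda^n$) yields
\[
h(g_\lambda) \,=\, n\log m+\log|K|+\log\!\left(n!\,e^n/n^n\right).
\]
Plugging in $m=1-\alpha r$ and invoking the inequality $n^ne^{-n}/n!\ge 1/(e\sqrt n)$ (valid for every $n\ge 1$, with equality at $n=1$, and readily obtained from Stirling), I conclude
\[
g(0) \,\ge\, e^{-h(g)} \,\ge\, \frac{1}{e\sqrt n}\cdot\frac{1}{(1-\alpha r)^n\,|K|},
\]
and the convolution identity at $x=0$, $|K\cap(-K)|/|K|=2^{-n}|K|\,g(0)$, gives the claimed bound.

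The main delicate point is ensuring that the Arias-De-Reyna--Ball--Villa/Gluskin--Milman concentration bound is invoked in the form which recovers precisely the exponent $(\sqrt 2-\eps)^2/4$ in $\alpha$. The maximum-entropy comparison $h(g)\le h(g_\lambda)$ is classical (proved via the nonnegativity of the KL divergence $D(g\|g_\lambda)$) and uses only the moment constraint on $g$, not log-concavity; log-concavity enters only through the Jensen step, and the barycenter condition only at the very end, to evaluate that step at the origin.
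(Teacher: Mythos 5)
Your proposal is correct and takes essentially the same route as the paper: the first bound is exactly the argument of Section \ref{sec:Unif_conv} (the Arias-De-Reyna--Ball--Villa/Gluskin--Milman estimate (\ref{eq:conc_dist}) combined with comparing the mass that the density of $(X+Y)/2$ gives to $(1-r)K$ against its Lebesgue measure), and your Jensen-plus-maximum-entropy step with the gauge-exponential reference density is the paper's entropy argument of Section \ref{sec:entropy} with $h(x)=e^{-\lambda\norm{x}_K}$ and $\lambda=n/m$. The only cosmetic differences are that you work with $(X+Y)/2$ and a general $|K|$ instead of $X+Y$ with $|K|=1$, and you phrase the comparison $\ent\le\lambda m+\ln(n!|K|\lambda^{-n})$ as nonnegativity of a KL divergence rather than as the paper's Lemma \ref{lem:jensen}; these are the same inequalities.
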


\begin{thm}
\label{thm:Hadwiger_UnifConv}Let $0<r<1$, $0<\eps<\sqrt{2}$, and
let $K\in\mathcal{K}_{n,r,\eps}$. Then 
\[
N\left(K,{\rm int}K\right)\le\left(1-e^{-\frac{\left(\sqrt{2}-\eps\right)^{2}}{4}n}\right)^{-1}\left(4\left(1-r\right)\right)^{n}.
\]
 
\end{thm}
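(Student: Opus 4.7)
The plan is to combine the uniform-convexity enhancement of the Kövner--Besicovitch bound, supplied by Theorem \ref{thm:uniform_convex}, with the covering-number inequality that drives the proof of Theorem \ref{thm:Hadwiger}. That general inequality, to be established in Section \ref{sec:Hadwiger} via the fractional-covering machinery of \cite{ArtSlom14-FracCov} together with a fractional-to-integral conversion, should take (essentially) the form
\[
N(K,\mathrm{int}\,K) \;\le\; \frac{2^n}{\kb{K}} \qquad \text{for every convex body } K\subseteq\R^n.
\]
Once this is in hand, the proof of Theorem \ref{thm:Hadwiger_UnifConv} is a one-line plug-in, since the uniform-convexity assumption on $K$ enters only through the improved symmetry bound rather than through any separate covering argument.

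First I would verify that any $K\in\mathcal{K}_{n,r,\eps}$ satisfies the hypotheses of Theorem \ref{thm:uniform_convex}: it is centered, and by definition $\delta_K(\eps)\ge r$ for the prescribed $\eps\in(0,\sqrt{2})$. That theorem then yields
\[
\kb{K} \;\ge\; \alpha\cdot 2^{-n}\left(\tfrac{1}{1-r}\right)^{n}, \qquad \alpha \;=\; 1 - \exp\left(-\tfrac{(\sqrt{2}-\eps)^{2}}{4}\,n\right).
\]
Substituting this lower bound into the covering inequality above gives
\[
N(K,\mathrm{int}\,K) \;\le\; \frac{2^n}{\alpha\cdot 2^{-n}(1-r)^{-n}} \;=\; \alpha^{-1}\bigl(4(1-r)\bigr)^{n},
\]
which is precisely the stated estimate.

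The main obstacle is not the combination step but rather ensuring that Section \ref{sec:Hadwiger} actually delivers the covering inequality in a sufficiently clean form. If the fractional-to-integral passage introduces an extra sub-exponential factor $C_n$ (for instance, a logarithmic Rogers-type term), one would have to check either that $C_n$ is harmlessly absorbed into the prefactor $\alpha^{-1}$ or that the conversion can be tightened, perhaps by exploiting uniform convexity a second time to obtain a sharper weight distribution in the fractional cover. In any case, the exponential rate $(4(1-r))^{n}$ is dictated entirely by the new Kövner--Besicovitch estimate of Theorem \ref{thm:uniform_convex}, so the exponential improvement for $\mathcal{K}_{n,r,\eps}$ comes essentially for free from the symmetry-measure refinement, mirroring how Theorem \ref{thm:Hadwiger} is derived from Theorem \ref{thm:main}.
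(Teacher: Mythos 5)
Your route is the same as the paper's: the proof there is literally one line---repeat the proof of Theorem \ref{thm:Hadwiger}, replacing Theorem \ref{thm:main} by Theorem \ref{thm:uniform_convex}---so your identification of the improved K\"ovner--Besicovitch bound as the only new ingredient is correct, and your arithmetic in the final plug-in step is fine. The gap is the covering inequality you take as a black box. Nothing in Section \ref{sec:Hadwiger} gives $N(K,{\rm int}\,K)\le 2^{n}/\kb K$. What the machinery gives is the \emph{fractional} bound $N_{\omega}(K,\lambda K)\le\left(\frac{1+\lambda}{\lambda}\right)^{n}/\kb K$ (Lemma \ref{lem:vol_Nw} applied to $\lambda\left(K\cap(x-K)\right)$), and converting to the integral covering number requires splitting $\lambda K=\alpha'\lambda K+(1-\alpha')\lambda K$ and invoking (\ref{eq:N_vs_N*}), which costs the factor $1+\ln\overline{N}\left(K,(1-\alpha')\lambda K\right)$. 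Bounding that term via Lemma \ref{fact:vol_bd} and (\ref{eq:MP}) and choosing $\alpha'=1-1/n$ as in the paper, the argument actually delivers
\[
N(K,{\rm int}\,K)\;\le\;C\,\alpha^{-1}\left(4(1-r)\right)^{n}\,n\ln n,
\]
not the clean bound. Your fallback---absorbing such a factor $C_{n}$ into the prefactor $\alpha^{-1}$---cannot work: $\alpha^{-1}=\left(1-e^{-(\sqrt{2}-\eps)^{2}n/4}\right)^{-1}$ is bounded and in fact tends to $1$, so it absorbs nothing that grows with $n$. In Theorem \ref{thm:Hadwiger} the analogous $n\ln n$ term is swallowed by weakening the exponent in $e^{-c\sqrt{n}}$; here there is no such term left to sacrifice.

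To be fair, this defect is not yours alone: the paper's own one-line proof, carried out literally, produces the same lower-order factor, so the constant-free form of Theorem \ref{thm:Hadwiger_UnifConv} should really be read up to such factors (the exponential rate $\left(4(1-r)\right)^{n}$ is indeed dictated entirely by Theorem \ref{thm:uniform_convex}, exactly as you say). If you insist on a genuinely clean bound with this method, you must either tighten the fractional-to-integral conversion or bypass it: for instance, covering $K$ directly by translates of the centrally symmetric body $\lambda\left(K\cap(x-K)\right)$ via Lemma \ref{fact:vol_bd} gives, after letting $\lambda\nearrow1$, the log-free estimate $\alpha^{-1}\left(6(1-r)\right)^{n}$, i.e.\ a worse base. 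So your proposal reproduces the paper's argument together with its imprecision, and the specific patch you suggest does not close the gap.
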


The paper is organised as follows. In Section \ref{sec:double-thin-shell}
we prove the first part of Theorem \ref{thm:main} and of Corollary
\ref{cor:psi_2} (the bounds for the K\"{o}vner-Besicovitch measure of
symmetry), and in Section \ref{sec:Hadwiger} we apply these to Hadwiger's
covering problem. Section \ref{sec:Unif_conv} is devoted to the respective
bounds in the case of uniformly convex bodies, i.e. the first part
of Theorem \ref{thm:uniform_convex} as well as Theorem \ref{thm:Hadwiger_UnifConv}.
Finally, in Section \ref{sec:entropy} we complete the proofs of Theorems
\ref{thm:main} and \ref{thm:uniform_convex} and of Corollary \ref{cor:psi_2}
by showing via our second approach how to bound the volume of the
symmetric intersection of $K$ at its barycenter as well. A couple
of concluding remarks are gathered at the end, including an application to a conjecture by Ehrhart in the geometry of numbers.

\section{\label{sec:double-thin-shell}Bounding the convolution square }

This section is devoted to the proof of Proposition \ref{prop:KB_psi}
below. To that end, we need to recall some facts and results.

Denote the standard Euclidean inner product on $\R^{n}$ by $\iprod{\cdot}{\cdot}$,
and the corresponding Euclidean norm on $\R^{n}$ by $\norm{\cdot}_{2}$.
We shall also denote probability by $\PP$ and expectation by $\EE$. 

Recall that a random vector in $\R^{n}$ is called isotropic if $\EE X=0$
(i.e., its barycenter is the origin) and $\EE\left(X\otimes X\right)=Id$
(i.e., its covariance matrix is the identity). We say that $X$ is
$\psi_{\alpha}$ with constant $b_{\alpha}$ if 
\[
\left(\EE\left|\iprod Xy\right|^{p}\right)^{1/p}\le b_{\alpha}p^{1/\alpha}\left(\EE\left|\iprod Xy\right|^{2}\right)^{1/2}\,\,\,\forall p\ge2,\,\,\forall y\in\R^{n}.
\]
A function $f:\R^{n}\to\left[0,\infty\right)$ is called log-concave
if $\log f$ is concave on the support of $f$. It is well-known that
any random vector $X$ in $\R^{n}$ with a log-concave density is
$\psi_{1}$ with $b_{1}\le C$, for some universal constant $C>0$
(see e.g. \cite[p. 115]{AGM15}). 

We shall need the following thin-shell deviation estimate of Gu\'{e}don
and E. Milman:
\begin{thm}
\label{thm:GM2011}(\cite[Theorem 1.1]{GuedonMilman2011}) Let $X$
denote an isotropic random vector in $\R^{n}$ with log-concave density,
which is in addition $\psi_{\alpha}$~($\alpha\in\text{\ensuremath{\left[1,2\right]}})$
with constant $b_{\alpha}$. Then, 
\[
\PP\left(\left|\norm X_{2}-\sqrt{n}\right|\ge t\sqrt{n}\right)\le C\exp\left(-c'b_{\alpha}^{-\alpha}\min\left(t^{2+\alpha},t\right)n^{\alpha/2}\right)\quad\forall t\ge0,
\]
where $c'>0$ is some universal constant. 
\end{thm}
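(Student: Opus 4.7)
My strategy would be to derive the tail inequality from $L^{p}$-moment estimates on $\norm{X}_{2}$ and then apply Markov's inequality with $p$ optimally tuned. The $\psi_{\alpha}$ hypothesis enters through the marginals $\iprod{X}{\theta}$ and is converted into a global estimate on $\norm{X}_{2}$ via polar integration in the spirit of Paouris' $Z_{p}$-body machinery: since $\EE\norm{X}_{2}^{p}$ may be recast as an average of $\EE|\iprod{X}{\theta}|^{p}$ over $\theta\in S^{n-1}$, the isotropic $\psi_{\alpha}$ bound $\EE|\iprod{X}{\theta}|^{p}\le (b_{\alpha}\,p^{1/\alpha})^{p}$ yields global $L^{p}$ control on $\norm{X}_{2}$.

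The two regimes in the final bound---$t^{2+\alpha}\,n^{\alpha/2}$ for $t\le 1$ and $t\,n^{\alpha/2}$ for $t\ge 1$---should correspond to two regimes in the resulting $L^{p}$ estimate: for $p\le c\,b_{\alpha}^{-\alpha}n^{\alpha/2}$ the deterministic $\sqrt{n}$ level dominates with only a lower-order $\psi_{\alpha}$ correction, while for larger $p$ the $\psi_{\alpha}$ term takes over and grows like $p^{1/\alpha}$. Optimising $p$ in Markov's inequality against each regime is what produces the two exponents in the statement. The upper tail follows in this way directly; the lower tail, usually the more delicate of the two, I would access through log-concavity of the distribution of $\norm{X}_{2}$---a consequence of Ball's theorem applied to the radial function $r\mapsto r^{n-1}f_{X}(r\theta)$---which supplies reverse H\"older inequalities and effective control on negative moments $\EE\norm{X}_{2}^{-p}$.

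The main obstacle is producing the sharp exponent $t^{2+\alpha}$ in the small-deviation regime, as a naive Markov argument using only a one-sided $L^{p}$ bound yields merely $t^{\alpha}$. The refinement would exploit the variance constraint that $\norm{X}_{2}$ is concentrated around $\sqrt{n}$ with fluctuations of essentially constant order (a consequence of isotropy together with log-concavity, via a Paouris-type second-moment calculation), combined with the $\psi_{\alpha}$ tail decay at larger scales; interpolating between these two scales supplies the extra factor of $t^{2}$. Matching the constants at the crossover $t\simeq 1$ so that the $\min(t^{2+\alpha},t)$ structure emerges cleanly is the final piece of delicate bookkeeping.
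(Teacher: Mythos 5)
You should note first that the paper does not prove this statement at all: it is Gu\'edon and E.\ Milman's thin-shell deviation theorem, quoted from \cite{GuedonMilman2011} and used as a black box, so the only meaningful comparison is with the actual Gu\'edon--Milman argument. Measured against that, your sketch has genuine gaps at exactly the hard points. The reduction you propose -- recovering $\EE\norm{X}_{2}^{p}$ from the spherical average of $\EE\left|\iprod X\theta\right|^{p}$ and inserting the $\psi_{\alpha}$ bound -- only gives $\left(\EE\norm X_{2}^{p}\right)^{1/p}\lesssim b_{\alpha}p^{1/\alpha}\sqrt{(n+p)/p}$, which for $\alpha<2$ and moderate $p$ is far worse than the ``$\sqrt{n}$ plus lower-order correction'' behaviour your Markov optimization needs; obtaining the correct growth of the moments of $\norm X_{2}$ is precisely the content of Paouris-type theorems and requires the $Z_{p}$-body/$I_{q}(X)$ machinery (mean-width estimates for the $Z_{p}$ bodies together with a bootstrapping argument), not a one-line polar integration. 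So even the upper tail, which you describe as following ``directly,'' is not established by the plan as written.

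More seriously, your route to the small-deviation exponent $t^{2+\alpha}n^{\alpha/2}$ is circular. You invoke ``the variance constraint that $\norm X_{2}$ is concentrated around $\sqrt{n}$ with fluctuations of essentially constant order (a consequence of isotropy together with log-concavity, via a Paouris-type second-moment calculation).'' No such calculation exists: bounding the fluctuations of $\norm X_{2}$ by a constant (or even by $n^{(1-\alpha/2)/2}$-type quantities) is the thin-shell/variance problem itself, and its quantitative form with the rate $n^{\alpha/2}$ is exactly the theorem you are trying to prove. The genuine proof controls the whole scale of moments $I_{q}(X)=\left(\EE\norm X_{2}^{q}\right)^{1/q}$ for negative as well as positive $q$, and extracts the $t^{2+\alpha}$ exponent from a smoothness (log-derivative) estimate for $q\mapsto I_{q}(X)$, obtained by combining one-dimensional log-concavity along rays with the Paouris-type positive-moment bounds; nothing in your outline substitutes for that step. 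Your suggestion to get the lower tail from ``log-concavity of the distribution of $\norm X_{2}$'' via Ball's theorem is also unsupported: for a fixed direction $\theta$ the function $r\mapsto r^{n-1}f_{X}(r\theta)$ is indeed log-concave, but the density of $\norm X_{2}$ is the spherical average of these, and averages of log-concave functions need not be log-concave, so the reverse H\"older/negative-moment control you want does not follow in the stated way. In short, the plan reproduces the easy bookkeeping around the theorem while assuming its core.
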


We remark that the dependence in $n$ in Theorem \ref{thm:GM2011}
is optimal, while the dependence in $t$ was recently improved by
Lee and Vempala \cite{LeeVempala18} in the $\psi_{1}$ case. However,
in our approach $t$ is going to be some fixed number which is bounded
away from $0$, thus optimizing over it cannot yield better bounds. 
\begin{prop}
\label{prop:KB_psi}Suppose $K$ is a convex body centered at the
origin which is $\psi_{\alpha}$ with constant $b_{\alpha}$. Then,
for some universal constant $c>0$,
\[
\kb K\ge\frac{\exp\left(cb_{\alpha}^{-\alpha}n^{\alpha/2}\right)}{2^{n}}.
\]
\end{prop}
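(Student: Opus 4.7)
The plan is to compare the distribution $\mu_X$ of a uniform random vector $X$ on $K$ with the distribution $\mu_Z$ of the midpoint $Z=(X+Y)/2$, where $Y$ is an independent copy of $X$, by exploiting the fact that these two distributions concentrate on two different thin shells.

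First, since both the ratio defining $\kb{K}$ and the $\psi_\alpha$ condition are affine-invariant, I would assume without loss of generality that $K$ is in isotropic position; then $X$ is isotropic, so $\mathbb{E}\|X\|_2^2=n$ and $\mathbb{E}\|Z\|_2^2=n/2$. A direct convolution computation using $f_X=\mathbf{1}_K/|K|$ shows that $Z$ has density
$$f_Z(z)\;=\;2^n\,\frac{|K\cap(2z-K)|}{|K|^2}$$
supported on $K$. Hence the desired inequality $\kb K\geq 2^{-n}\exp(cb_\alpha^{-\alpha}n^{\alpha/2})$ is equivalent to producing a single point $z^*\in K$ at which the likelihood ratio $f_Z(z^*)/f_X(z^*)=|K|f_Z(z^*)=2^n|K\cap(2z^*-K)|/|K|$ is at least $\exp(cb_\alpha^{-\alpha}n^{\alpha/2})$.

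To produce this point I would apply Theorem~\ref{thm:GM2011} twice. Applied directly to $X$, it gives the concentration $\|X\|_2\approx\sqrt n$. For $Z$, note that $Z$ has a log-concave density (as a convolution of log-concave densities, rescaled) and that its isotropic rescaling $\sqrt 2\,Z$ is $\psi_\alpha$ with constant at most $\sqrt 2\,b_\alpha$ by a triangle inequality in the $\psi_\alpha$ norm; Theorem~\ref{thm:GM2011} applied to $\sqrt 2\,Z$ then gives $\|Z\|_2\approx\sqrt{n/2}$. Fixing any $s\in(0,\sqrt 2-1)$ and setting $r:=(1+s)\sqrt{n/2}\in(\sqrt{n/2},\sqrt n)$, the sphere of radius $r$ lies strictly between the two concentration shells, and we get
$$\mu_X(rB_2^n)\leq Ce^{-c_1 b_\alpha^{-\alpha}n^{\alpha/2}}\qquad\text{and}\qquad \mu_Z(rB_2^n)\geq 1-Ce^{-c_1 b_\alpha^{-\alpha}n^{\alpha/2}}.$$

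Finally, setting $A:=rB_2^n\cap K$ and using the elementary averaging
$$\mu_Z(A)\;=\;\int_A\frac{f_Z(z)}{f_X(z)}\,f_X(z)\,dz\;\leq\;\Bigl(\sup_{z\in A}\frac{f_Z(z)}{f_X(z)}\Bigr)\mu_X(A),$$
I obtain some $z^*\in A$ with $f_Z(z^*)/f_X(z^*)\geq \tfrac12 e^{c_1 b_\alpha^{-\alpha}n^{\alpha/2}}$ as soon as $n$ is large enough that the right-hand tail above is at most $1/2$; the remaining finitely many small dimensions are absorbed into $c$ by the classical bound $\kb K\geq 2^{-n}$ in (\ref{eq:KB_bound}). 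Unpacking the likelihood ratio as in the first paragraph yields $|K\cap(2z^*-K)|/|K|\geq 2^{-n}\exp(cb_\alpha^{-\alpha}n^{\alpha/2})$, which is a lower bound for $\kb K$. The main technical obstacle is the verification that the isotropic rescaling of $(X+Y)/2$ keeps a $\psi_\alpha$ constant comparable to $b_\alpha$, so that the second application of Theorem~\ref{thm:GM2011} yields the same exponential decay rate as the first; once this is in hand, the remaining steps are essentially automatic.
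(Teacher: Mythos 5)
Your proposal is correct and follows essentially the same route as the paper's own proof: it compares the masses that $X$ and $(X+Y)/2$ assign to a Euclidean ball whose radius lies between their two thin shells, transfers the $\psi_{\alpha}$ property to $(X+Y)/2$ (with constant $\sqrt{2}\,b_{\alpha}$) via Minkowski's inequality, applies the Gu\'edon--Milman estimate to both rescaled vectors, and converts the resulting ratio of probabilities into a pointwise lower bound on the convolution-square density. The only differences are cosmetic (normalizing the covariance to the identity rather than $|K|=1$ with $L_K$, and an arbitrary intermediate radius $(1+s)\sqrt{n/2}$ instead of the paper's balanced choice $t=\frac{\sqrt{2}-1}{\sqrt{2}+1}$), and your treatment of small $n$ is at the same level of constant-chasing as the paper's.
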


We remark that Theorem \ref{thm:main} is a particular consequence
of Proposition \ref{prop:KB_psi}, as all random vectors with log-concave
densities are $\psi_{1}$ with the same universal constant. 
\begin{proof}[Proof of Proposition \ref{prop:KB_psi}]
Let $X$ and $Y$ be  independent random vectors, uniformly distributed
on $K$. Since $\kb K$ is affine invariant, we may assume without
loss of generality that $K$ is in isotropic position: this means
that $\left|K\right|=1,$ ${\rm b}(K)$ = 0 as assumed already, and
that $\EE\left(X\otimes X\right)$ is a multiple of the identity,
$\EE\left(X\otimes X\right)=L_{K}^{2}Id$ where $L_{K}$ is called
the isotropic constant of $K$ (note that this is another well-defined
affine invariant of $K$). Equivalently, we ask that $\left|K\right|=1$
and $X/L_{K}$ is isotropic as defined above.

We are now looking for a lower bound for $\norm f_{\infty}$ where
$f=\one_{K}*\one_{K}$ is the density function for the random vector
$X+Y$. Instead, we shall work with $\frac{X+Y}{2}$ so that both
$\frac{X+Y}{2}$ and $X$ have the same support. The probability density
function of $\frac{X+Y}{2}$ is then $g\left(x\right)=f\left(2x\right)2^{n}$.
There are many nice properties that $\frac{X+Y}{2}$ inherits from
$X$. In particular, $\frac{X+Y}{2}$ has a centered log-concave density
(the latter is a consequence of the Pr\'{e}kopa-Leindler inequality, see
e.g. \cite{AGM15}). Moreover, 
\begin{align*}
 & \text{\ensuremath{\mathbb{\EE}}}_{X,Y}\left(\frac{X+Y}{2}\right)\otimes\left(\frac{X+Y}{2}\right)\\
= & \frac{1}{4}\EE_{X,Y}\left(X\otimes X+X\otimes Y+Y\otimes X+Y\otimes Y\right)\\
= & \frac{1}{4}\left(L_{K}^{2}I+0+0+L_{K}^{2}I\right)\\
= & \frac{1}{2}L_{K}^{2}I.
\end{align*}
Thus, $\frac{X+Y}{2}$ is isotropic up to scaling. Finally, $\frac{X+Y}{2}$
has more or less the same $\psi_{\alpha}$ behavior as $X$ (indeed,
the above computations already show that
\[
\left(\EE\left|\iprod{\tfrac{X+Y}{2}}y\right|^{2}\right)^{1/2}=\,\frac{1}{\sqrt{2}}L_{K}\|y\|_{2}\,=\,\frac{1}{\sqrt{2}}\left(\EE\left|\iprod Xy\right|^{2}\right)^{1/2}
\]
 for every $y\in\R^{n},$ hence a single application of Minkowski's
inequality gives

\begin{align*}
\left(\EE\left|\iprod{\tfrac{X+Y}{2}}y\right|^{p}\right)^{1/p}\leq2\left(\EE\left|\iprod{\tfrac{X}{2}}y\right|^{p}\right)^{1/p} & =\left(\EE\left|\iprod Xy\right|^{p}\right)^{1/p}\\
 & \le b_{\alpha}p^{1/\alpha}\left(\EE\left|\iprod Xy\right|^{2}\right)^{1/2}\\
 &=\sqrt{2}\,b_{\alpha}p^{1/\alpha}\left(\EE\left|\iprod{\tfrac{X+Y}{2}}y\right|^{2}\right)^{1/2},
\end{align*}
assuming $X$ is $\psi_{\alpha}$ with constant $b_{\alpha}$. It
is worth remarking however that, for our proof here, the fact that
the distribution of $\frac{X+Y}{2}$ is $\psi_{1}$ suffices (and,
as mentioned already, this is true for every log-concave distribution).

Observe now that for any $r>0$ we have
\[
\norm g_{\infty}\ge\frac{\int_{rL_{K}\sqrt{n}B_{2}^{n}\cap K}g\left(x\right)\d x}{\int_{rL_{K}\sqrt{n}B_{2}^{n}\cap K}1\d x}=\frac{\PP\left(\norm{\frac{X+Y}{2}}_{2}\le rL_{K}\sqrt{n}\right)}{\PP\left(\norm X_{2}\le rL_{K}\sqrt{n}\right)}.
\]
Since $\EE_{X,Y}\norm{\frac{X+Y}{2}}_{2}^{2}=\frac{1}{2}nL_{K}^{2}$
and $\EE_{X}\norm X_{2}^{2}=nL_{K}^{2}$, we know that the distributions
of $X$ and $\frac{X+Y}{2}$ are concentrated within two different
thin-shells. Thus, for $\frac{1}{\sqrt{2}}<r<1$, we get that $\PP_{X,Y}\left(\norm{\frac{X+Y}{2}}_{2}\le rL_{K}\sqrt{n}\right)$
is almost $1$ since the set considered includes the ``good'' thin-shell
of $\frac{X+Y}{2}$. On the other hand, $\PP\left(\norm X_{2}\le rL_{K}\sqrt{n}\right)$
is almost $0$ since the set considered excludes the corresponding
thin-shell of $X$. To quantify this, we apply Theorem \ref{thm:GM2011}:
for any isotropic $\psi_{\alpha}$ log-concave vector $Z$ the inequality
in \ref{thm:GM2011} is split into 
\[
\PP\left(\norm Z_{2}\le\left(1-t\right)\sqrt{n}\right)\le C\exp\left(-c'b_{\alpha}^{-\alpha}\min\left(t^{2+\alpha},t\right)\sqrt{{n}}\right)\quad\forall t\in[0,1],
\]
\[
\PP\left(\norm Z_{2}\ge\left(1+t\right)\sqrt{n}\right)\le C\exp\left(-c'b_{\alpha}^{-\alpha}\min\left(t^{2+\alpha},t\right)\sqrt{n}\right)\quad\forall t\ge0.
\]
Since we shall apply the first one with $Z$ replaced by $X/L_{K}$
and the second one with $Z$ replaced by $\frac{X+Y}{2}\cdot\frac{\sqrt{2}}{L_{K}}$,
we need $1-t=\frac{1+t}{\sqrt{2}}$ and hence $t=\frac{\sqrt{2}-1}{\sqrt{2}+1}$.
We thus obtain
\[
\PP\left(\norm X_{2}\le\frac{2}{\sqrt{2}+1}L_{K}\sqrt{n}\right)\le\exp\left(-c'b_{\alpha}^{-\alpha}n^{\alpha/2}\right),
\]
and 
\[
\PP\left(\norm{\frac{X+Y}{2}}_{2}\le\frac{2}{\sqrt{2}+1}L_{K}\sqrt{n}\right)\ge1-\exp\left(-c'b_{\alpha}^{-\alpha}n^{\alpha/2}\right).
\]
Therefore, we conclude that for some universal constant $c>0$
\[
\norm g_{\infty}\ge\exp\left(cb_{\alpha}^{-\alpha}n^{\alpha/2}\right),
\]
and equivalently
\[
\kb K=\frac{\norm g_{\infty}}{2^{n}}\ge\frac{\exp\left(cb_{\alpha}^{-\alpha}n^{\alpha/2}\right)}{2^{n}}.
\]
\end{proof}

\section{\label{sec:Hadwiger}A new bound for Hadwiger's covering problem}

This section is devoted to the proof of Theorems \ref{thm:Hadwiger}
and \ref{thm:Hadwiger-psi2}. To that end, we need some preliminaries. 

Let $\overline{N}\left(A,B\right)=\min\left\{ N\,:\,\exists x_{1},\dots,x_{N}\in A\text{ such that }A\sub\bigcup_{i=1}^{N}\left\{ x_{i}+B\right\} \right\} $
be the covering number of $A$ by translates of $B$ that are centered
in $A$. We shall need the following volume ratio bound. 
\begin{lem}
\label{fact:vol_bd}Let $A,B\sub\R^{n}$ be convex bodies. Suppose
$B$ contains the origin in its interior. Then 
\[
\overline{N}\left(A,B\right)\le2^{n}\frac{\left|A+\frac{1}{2}(B\cap(-B))\right|}{\left|B\cap(-B)\right|}.
\]
\end{lem}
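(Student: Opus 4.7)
The plan is to use a standard volume-packing argument, combined with the fact that the symmetrization $C := B \cap (-B)$ is centrally symmetric, which lets us convert a packing of $A$ by translates of $\tfrac12 C$ into a covering of $A$ by translates of $B$.

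More concretely, I would first let $C = B \cap (-B)$, which is a centrally symmetric convex body (nonempty and with $0$ in its interior since $B$ does). I would then choose a maximal finite collection of points $x_1,\dots,x_N \in A$ such that the translates $x_i + \tfrac12 C$ are pairwise disjoint; maximality is possible because $A$ is bounded and $\tfrac12 C$ has positive volume.

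The key claim is that the centers $x_1,\dots,x_N$ already furnish an $A$-centered covering of $A$ by translates of $B$, so that $\overline{N}(A,B) \le N$. Indeed, for any $x \in A$, by maximality of the packing the set $x + \tfrac12 C$ must meet some $x_i + \tfrac12 C$, so that
$$ x \,\in\, x_i + \tfrac12 C - \tfrac12 C \,=\, x_i + C $$
(here I use that $C$ is centrally symmetric and convex, so $\tfrac12 C - \tfrac12 C = \tfrac12 C + \tfrac12 C = C$). Since $C \subseteq B$, this gives $x \in x_i + B$ as required.

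For the volume estimate, since the packed bodies $x_i + \tfrac12 C$ are disjoint and each is contained in $A + \tfrac12 C$, I would conclude
$$ N \cdot \bigl|\tfrac12 C\bigr| \,\le\, \bigl|A + \tfrac12 C\bigr|, $$
and using $|\tfrac12 C| = 2^{-n}|C|$ this rearranges to the desired
$$ \overline{N}(A,B) \,\le\, N \,\le\, 2^n \,\frac{|A + \tfrac12 (B \cap (-B))|}{|B \cap (-B)|}. $$
There is no significant obstacle here; the only subtlety to be careful about is the use of symmetry of $C$ in the identity $\tfrac12 C - \tfrac12 C = C$, which fails for general convex $B$ and is precisely the reason for symmetrizing $B$ to $B \cap (-B)$ before running the packing argument.
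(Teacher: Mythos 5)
Your proof is correct and follows essentially the same route as the paper: a maximal packing of $A$ by translates of $\tfrac12\left(B\cap(-B)\right)$ centered in $A$, the identity $\tfrac12 C-\tfrac12 C=C\subseteq B$ for the symmetrized body, and the standard volume comparison $N\left|\tfrac12 C\right|\le\left|A+\tfrac12 C\right|$. The paper merely phrases the same argument through the separation number $M\left(A,T\right)$ and the general facts $\overline{N}\left(A,T-T\right)\le M\left(A,T\right)$ and $M\left(A,T\right)\le\left|A+T\right|/\left|T\right|$.
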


\begin{proof}
Recall that the separation number of $A$ in $B$ is defined as 
\[
M\left(A,B\right)=\max\left\{ M\,:\,\exists x_{1},\dots,x_{M}\in A\text{ such that }\forall i\neq j\,\left(x_{i}+B\right)\cap\left(x_{j}+B\right)=\emptyset\right\} .
\]
It is an easy exercise (see e.g. \cite{ArtSlom14-FracCov}) to show
that 
\[
M\left(A,B\right)\le\frac{\left|A+B\right|}{\left|B\right|}.
\]
Next, note that for any convex body $T\sub\R^{n}$, one has $\overline{N}\left(A,T-T\right)\le M\left(A,T\right).$
Indeed, take a maximal $T$-separated set in $A$, that is a set of
points $x_{1},\dots,x_{M}\in A$ such that for every point $x\in A$
one has $\left(x+T\right)\cap\bigcup_{i=1}^{M}\left\{ x_{i}+T\right\} \neq\emptyset$.
This means that $A\sub\bigcup_{i=1}^{M}\left\{ x_{i}+T-T\right\}$ or, in other words, that $\overline{N}\left(A,T-T\right)\le M\left(A,T\right)$.
Since $\overline{N}\left(A,B\right)\le\overline{N}\left(A,B\cap(-B)\right)$, it follows that $\overline{N}\left(A,B\right)\le M\left(A,\frac{1}{2}\left(B\cap(-B)\right)\right)$,
and hence 
\[
\overline{N}\left(A,B\right)\le2^{n}\frac{\left|A+\frac{1}{2}\left(B\cap(-B)\right)\right|}{\left|B\cap(-B)\right|}.
\]
\end{proof}
Next, we recall the notion of fractional covering numbers, as defined in \cite{ArtSlom14-FracCov}. Remember that $\one_{A}$ stands for the indicator function of a set $A\sub\R^{n}$. A sequence of pairs of points and weights $S=\{(x_{i}\,,\,\omega_{i}):\,x_{i}\in\R^{n},\,\omega_{i}\in\R^{+}\}_{i=1}^{N}$ is said to be a fractional covering of a set
$K\sub\R^{n}$ by a set $T\sub\R^{n}$ if for all $x\in K$ we have
$\sum_{i=1}^{N}\omega_{i}\one_{x_{i}+T}\left(x\right)\ge1$. The total
weight of the covering is denoted by $\omega(S)=\sum_{i=1}^{N}\omega_{i}$.
The fractional covering number of $K$ by $T$ is defined to be the
infimal total weight over all fractional coverings of $K$ by $T$
and is denoted by $N_{\omega}(K,T)$. 

We shall also need the following volume ratio bound from \cite{ArtSlom14-FracCov}:
\begin{lem}[{\cite[Proposition 2.9]{ArtSlom14-FracCov}}]
\label{lem:vol_Nw}Let $K,T\sub\R^{n}$ be convex bodies. Then 
\[
N_{\omega}\left(K,T\right)\le\frac{\left|K-T\right|}{\left|T\right|}.
\]
\end{lem}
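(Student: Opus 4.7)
The plan is to establish the bound by exhibiting a near-optimal fractional cover of $K$ via a continuous averaging construction, then discretizing to match the formal definition of $N_{\omega}$.

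First, observe which translates are actually relevant: $\left(x+T\right)\cap K\neq\emptyset$ precisely when $x\in K-T$. This motivates considering the ``continuous cover'' with density $f\left(x\right)=\left|T\right|^{-1}\one_{K-T}\left(x\right)$, whose total mass equals the target bound $\left|K-T\right|/\left|T\right|$.

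Second, verify that $f$ furnishes an integrated cover of $K$. For any $y\in K$, using the identity $\one_{x+T}\left(y\right)=\one_{y-T}\left(x\right)$,
\[
\int_{\R^{n}}\one_{x+T}\left(y\right)f\left(x\right)\d x=\frac{\left|\left(y-T\right)\cap\left(K-T\right)\right|}{\left|T\right|}.
\]
The key observation is that $y\in K$ forces $y-T\sub K-T$, since for every $t\in T$ one has $\left(y-t\right)+t=y\in K$. Hence $\left(y-T\right)\cap\left(K-T\right)=y-T$ has volume $\left|T\right|$, making the integral equal to $1$.

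Third, discretize. Fix $\eps>0$ and a grid of cubes $\left\{ Q_{j}\right\}$ of small side $\delta$; choose a representative $x_{j}\in Q_{j}$ for each cube meeting $K-T$ and set $\omega_{j}=\left(1+\eps\right)\left|T\right|^{-1}\left|Q_{j}\cap\left(K-T\right)\right|$. Only finitely many $\omega_{j}$ are nonzero since $K-T$ is bounded, and the total weight equals $\left(1+\eps\right)\left|K-T\right|/\left|T\right|$. For $\delta$ small enough, the discrete sum $\sum_{j}\omega_{j}\one_{x_{j}+T}\left(y\right)$ approximates $\left(1+\eps\right)$ uniformly in $y\in K$, hence is $\ge1$ for every such $y$. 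This produces a legitimate fractional cover of total weight $\left(1+\eps\right)\left|K-T\right|/\left|T\right|$, and letting $\eps\to0$ yields the claim.

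The main technical point is controlling the discretization uniformly over $y\in K$: the integrand $\one_{y-T}\left(x\right)$ is discontinuous across $\partial\left(y-T\right)$, so one must verify that the Riemann-sum error tends to $0$ uniformly in $y$. This follows from the compactness of $K$ together with the fact that $\partial\left(y-T\right)$ has Lebesgue measure zero. Alternatively, one could first replace $T$ by $T+\delta B_{2}^{n}$ to obtain a continuous kernel and then pass to the limit. No ideas beyond the continuous averaging construction are really needed.
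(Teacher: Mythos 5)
Your proof is correct, and it is essentially the argument behind the cited result (this paper only quotes the lemma from \cite{ArtSlom14-FracCov}, where the proof is exactly this continuous averaging with the density $\frac{1}{\left|T\right|}\one_{K-T}$, using $y\in K\Rightarrow y-T\sub K-T$, followed by a discretization to meet the finite-sequence definition of $N_{\omega}$). Two small remarks: the uniformity of the Riemann-sum error is best justified by noting that all the sets $y-T$ are translates of a single convex body, so the grid cubes meeting $\partial\left(y-T\right)$ have total volume $O_{n,T}(\delta)$ independently of $y$ (mere measure-zero of the boundary plus compactness is a bit loose), while your parenthetical alternative of replacing $T$ by $T+\delta B_{2}^{n}$ goes in the wrong direction, since enlarging $T$ only decreases $N_{\omega}\left(K,T\right)$ and the resulting bound would not pass back to $T$; the main argument, however, stands on its own.
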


Finally, we shall need the following inequality that relates integral covering numbers and fractional covering numbers, and which was proved
in \cite{Naszodi2014}, improving  on a similar inequality in \cite{ArtSlom14-FracCov}.  For any bounded Borel measurable sets, $K,T_1$ and $T_2$, one has
\begin{equation}
N\text{\ensuremath{\left(K,T_{1}+T_{2}\right)\le N_{\omega}\left(K,T_{1}\right)\left(1+\ln\overline{N}\left(K,T_{2}\right)\right)}}.\label{eq:N_vs_N*}
\end{equation}

To be more precise, \eqref{eq:N_vs_N*} immediately follows from  \cite[Theorem 1.2]{Naszodi2014}, applied with $L=T_1+T_2$ and $T=T_2$.

\begin{proof}[Proof of Theorem \ref{thm:Hadwiger} ]
We can assume without loss of generality that ${\rm b}(K)=0.$ By
Lemma \ref{lem:vol_Nw}, for $0<\lambda<1$ and any $x\in\R^{n}$
we have 
\begin{align*}
N_{\omega}\left(K,\lambda K\right) & \le N_{\omega}\left(K,\lambda\left(K\cap\left(x-K\right)\right)\right)\le\frac{\left|K-\lambda\left(K\cap\left(x-K\right)\right)\right|}{\left|\lambda\left(K\cap\left(x-K\right)\right)\right|}\\
 & \le\left(\frac{1+\lambda}{\lambda}\right)^{n}\frac{\left|K\right|}{\left|K\cap\left(x-K\right)\right|}.
\end{align*}
By applying Theorem \ref{thm:main} with the point $x$ which maximizes
the above volume ratio, we get 
\[
N_{\omega}\left(K,\lambda K\right)\le\left(\frac{1+\lambda}{\lambda}\right)^{n}2^{n}e^{-c\sqrt{n}}.
\]
Using (\ref{eq:N_vs_N*}) with $T_{1}=\alpha\lambda K,T_{2}=\left(1-\alpha\right)\lambda K$
for some $\alpha\in\left(0,1\right)$, we obtain
\[
N\left(K,\lambda K\right)\le\left(\frac{1+\alpha\lambda}{\alpha\lambda}\right)^{n}2^{n}e^{-c\sqrt{n}}\left(1+\ln\overline{N}\left(K,\left(1-\alpha\right)\lambda K\right)\right).
\]
Using Lemma \ref{fact:vol_bd} and taking the limit $\lambda\nnearrow1$,
we get
\begin{align*}
N\left(K,{\rm int}K\right) & \le\left(\frac{1+\alpha}{\alpha}\right)^{n}2^{n}e^{-c\sqrt{n}}\left(1+\ln\left(2^{n}\frac{\left|K+\frac{1}{2}\left(1-\alpha\right)\left(K\cap(-K)\right)\right|}{\left|\left(1-\alpha\right)\left(K\cap(-K)\right)\right|}\right)\right)\\
 & \le\left(\frac{1+\alpha}{\alpha}\right)^{n}2^{n}e^{-c\sqrt{n}}\left(1+\ln\left(\left(\frac{4}{1-\alpha}\right)^{n}\frac{\left|K\right|}{\left|K\cap(-K)\right|}\right)\right).
\end{align*}
Since $K$ is centered at the origin, (\ref{eq:MP}) (or its improvement
in Theorem \ref{thm:main}, which however cannot essentially affect
the final estimate here) implies that 
\begin{align*}
N\left(K,{\rm int}K\right) & \le\left(\frac{1+\alpha}{\alpha}\right)^{n}2^{n}e^{-c\sqrt{n}}\left(1+\ln\left(\left(\frac{4}{1-\alpha}\right)^{n}2^{n}\right)\right)\\
 & \le\left(\frac{1+\alpha}{\alpha}\right)^{n}2^{n}e^{-c\sqrt{n}}\left(1+n\ln\left(\frac{8}{1-\alpha}\right)\right).
\end{align*}
Plugging in $\alpha=1-1/n$ yields that, for some universal constants
$c_{1},c_{2}>0$, we have
\begin{align*}
N\left(K,{\rm int}K\right) & \le c_{1}4^{n}e^{-c_{2}\sqrt{n}}\text{.}
\end{align*}
\end{proof}
The proof of Theorem \ref{thm:Hadwiger-psi2} is the same as that of Theorem \ref{thm:Hadwiger}, except that one uses Corollary \ref{cor:psi_2} instead of Theorem \ref{thm:main}.
\section{\label{sec:Unif_conv}Positive modulus of convexity }
Recall that the modulus of convexity of a centered convex body $K\sub\R^{n}$
is defined by 
\[
\delta_{K}\left(\eps\right)=\inf\left\{ 1-\norm{\frac{x+y}{2}}_{K}\st\norm x_{K},\norm y_{K}\le1,\,\norm{x-y}_{K}\ge\eps\right\} .
\]

A result of Arias-De-Reyna, Ball, and Villa \cite{arias-de-reyna_ball_villa98},
which was generalized by Gluskin and Milman \cite{GluskinMilman2004},
tells us that if $K\sub\R^{n}$ is a convex body such that $0\in{\rm int}\,K$
and $\left|K\right|=1$  then for all $0<\eps'<1$ one has 
\begin{equation}
\left|\left\{ \left(x,y\right)\in K\times K\st\norm{x-y}_{K}\le\sqrt{2}\left(1-\eps'\right)\right\} \right|\le e^{-\eps'^{2}n/2}.\label{eq:conc_dist}
\end{equation}

We use this result to prove Theorem \ref{thm:uniform_convex}: 
\begin{proof}[Proof of first part of Theorem \ref{thm:uniform_convex}]
Without loss of generality, we assume that $\left|K\right|=1$. Let
$X$ and $Y$ be independent random vectors, uniformly distributed
on $K$. Let $f\left(x\right)=\left|K\cap\left(x-K\right)\right|$
and recall that the density of $\frac{X+Y}{2}$ is $g\left(x\right)=2^{n}f\left(2x\right)$. 

Since, by assumption, $\delta_{K}\left(\eps\right)\ge r$, the set
$\theta=\left\{ \left(x,y\right)\in K\times K\st\norm{x-y}_{K}\geq\eps\right\} $
satisfies that 
\[
\theta\sub\left\{ \left(x,y\right)\in K\times K\st\frac{x+y}{2}\in\left(1-r\right)K\right\} .
\]
By (\ref{eq:conc_dist}), we have that $\left|\theta\right|\ge1-e^{-\frac{\left(\sqrt{2}-\eps\right)^{2}n}{4}}$
and hence 
\begin{align*}
\PP\left(\frac{X+Y}{2}\in\left(1-r\right)K\right) & =\iint_{\left\{ \left(x,y\right)\in K\times K\st\frac{x+y}{2}\in\left(1-r\right)K\right\} }\d x\d y\\
 & \ge\iint_{\theta}\d x\d y\ge1-e^{-\frac{\left(\sqrt{2}-\eps\right)^{2}n}{4}}.
\end{align*}
 Therefore, it follows that 
\begin{align*}
\norm g_{\infty} & \ge\frac{\int_{\left(1-r\right)K}g\left(x\right)\d x}{\int_{\left(1-r\right)K}\d x}=\frac{\PP\left(\frac{X+Y}{2}\in\left(1-r\right)K\right)}{\PP\left(X\in\left(1-r\right)K\right)}\\
 & \ge\left(\frac{1}{1-r}\right)^{n}\left(1-e^{-\frac{\left(\sqrt{2}-\eps\right)^{2}n}{4}}\right).
\end{align*}
\end{proof}
Repeating the proof of Theorem \ref{thm:Hadwiger} but now using Theorem
\ref{thm:uniform_convex}, Theorem \ref{thm:Hadwiger_UnifConv} follows.\\

\section{\label{sec:entropy}Bounding the convolution square at the barycenter}

This section is devoted to the proof of Proposition \ref{prop:MP_psi}
below (which will give the full proofs of Theorem \ref{thm:main}
and Corollary \ref{cor:psi_2}) as well as completing that of Theorem
\ref{thm:uniform_convex} (the arguments will be very similar, just
different applications of the same method). We recall that, for a
random vector $X$ in $\R^{n}$ with density $f,$ we define its entropy
as 

\[
\ent[X]=-\int_{\R^{n}}f\ln f.
\]

The conclusions of the following standard lemma are simple consequences
of Jensen's inequality.
\begin{lem}
\label{lem:jensen}For any measurable function $h:\R^{n}\to[0,+\infty)$
which is positive on the support of $f$ we have

\begin{equation}
\ent[X]\leq-\int_{\R^{n}}f\ln h\,+\,\ln\left(\int_{\R^{n}}h\right),\label{eq:lem-jensen1}
\end{equation}
assuming all the quantities are finite. Moreover, if $X$ has a log-concave
density, then 

\begin{equation}
\ent[X]=\EE[-\ln f(X)]\geq-\ln f(\EE X).\label{eq:lem-jensen2}
\end{equation}
\end{lem}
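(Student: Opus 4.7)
The plan is to derive both inequalities as essentially one-line applications of Jensen's inequality to the density $f$ of $X$.

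For the first inequality, I would rewrite the desired bound $\ent[X]\le-\int f\ln h\d x + \ln\int h\d x$ in the equivalent form
\[
\int_{\R^n} f\ln(h/f)\d x \,\le\, \ln\!\left(\int_{\R^n} h\d x\right).
\]
Since $f$ is a probability density and $\ln$ is concave, Jensen's inequality applied to the random variable $h(x)/f(x)$ under the probability measure $f(x)\d x$ gives precisely this, with right-hand side equal to $\ln\int_{\{f>0\}} f\cdot(h/f)\d x \le \ln\int_{\R^n} h\d x$. The standard convention $0\cdot\ln 0 = 0$ handles the set $\{f=0\}$ on the left, and the hypothesis that $h>0$ on the support of $f$ ensures no $\ln 0$ appears where $f>0$. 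Equivalently, the same inequality is just the non-negativity of the Kullback--Leibler divergence of $f$ from the probability density $h/\int h$.

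For the second inequality, log-concavity of $f$ means $-\ln f$ is a convex function on the (convex) support of $f$. Jensen's inequality applied directly to this convex function and the random vector $X$ therefore yields
\[
\ent[X] \,=\, \EE[-\ln f(X)] \,\ge\, -\ln f(\EE X),
\]
provided $\EE X$ lies where $f$ is positive. A log-concave density is strictly positive on the interior of its (convex) support, and in the intended applications of the lemma the barycenter $\EE X$ is indeed an interior point of that support, so the right-hand side is well-defined and finite.

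I do not foresee any real obstacle, since both parts are textbook consequences of Jensen. The only bookkeeping worth flagging is the handling of zero-measure sets through the convention $0\cdot\ln 0 = 0$, and the verification that $f(\EE X)>0$ in the second part; neither issue is a genuine difficulty in our setting.
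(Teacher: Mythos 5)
Your proposal is correct and follows essentially the same route as the paper: the first inequality is the rearrangement $\int f\ln(h/f)\le\ln\int h$ via Jensen (equivalently, non-negativity of relative entropy), and the second is Jensen applied to the convex function $-\ln f$. The extra care you take about the convention $0\ln 0=0$ and the positivity of $f$ at $\EE X$ is harmless bookkeeping that the paper leaves implicit.
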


\begin{proof}
To prove (\ref{eq:lem-jensen1}), we write

\[
\ent[X]+\int_{\R^{n}}f\ln h=\int_{\R^{n}}f\ln\frac{h}{f}\leq\ln\left(\int_{\R^{n}}h\right),
\]
with the inequality following by Jensen's inequality. As for (\ref{eq:lem-jensen2}),
we note that, if $f$ is assumed log-concave, $-\ln f$ will be a
convex function on $\R^{n},$ which allows to apply Jensen's inequality
again. 
\end{proof}
\begin{rem}
We will apply Lemma \ref{lem:jensen} as follows. If $K\subset\R^{n}$
is a centered convex body, and $X,Y$ are independent random vectors
uniformly distributed on $K,$ then the density $f$ of $X$ is given
by $f(x)=\frac{1}{|K|}\one_{K}$, while the density $g$ of $X+Y$
by $g(x)=\frac{1}{|K|^{2}}(\one_{K}*\one_{K})\left(x\right)=\frac{1}{|K|^{2}}|K\cap(x-K)|$
(recall that $X+Y$ has a centered log-concave density, which is not
hard to check using this identity). These show that $\ent[X]=\ln|K|,$
while, by (\ref{eq:lem-jensen2}),
\[
-\ln\left(\frac{|K\cap(-K)|}{|K|^{2}}\right)=-\ln g(0)\leq\ent[X+Y].
\]

Therefore,

\begin{equation}
-\ln\left(\frac{|K\cap(-K)|}{|K|}\right)=-\ln\left(\frac{|K\cap(-K)|}{|K|^{2}}\right)-\ln|K|\leq\ent[X+Y]-\ent[X],\label{eq:ent-app1}
\end{equation}
 which we can combine with (\ref{eq:lem-jensen1}), applied for the
vector $X+Y,$ to obtain

\begin{equation}
-\ln\left(\frac{|K\cap(-K)|}{|K|}\right)\leq\EE[-\ln h(X+Y)]+\ln\left(\int_{\R^{n}}h\right)-\ent[X]\label{eq:ent-app2}
\end{equation}
for any integrable function $h:\R^{n}\to[0,+\infty)$ which is positive
on $2K$ (note that the first term on the right hand side depends
only on values of $h$ on $2K$, whereas the second term can only
get smaller or stay the same when $h$ is restricted to $2K$; in
other words, replacing $h$ with $h\one_{2K}$ might only improve
the right hand side).

Observe that, by choosing $h$ constant on $2K$ (and zero otherwise),
one can recover (\ref{eq:MP}). In the remainder of this section,
we will choose different $h$ in order to establish the improvements
of (\ref{eq:MP}) claimed earlier.
\end{rem}

\begin{prop}
\label{prop:MP_psi}Suppose $K$ is a convex body centered at the
origin which is $\psi_{\alpha}$ with constant $b_{\alpha}$. Then,
for some universal constant $c>0$,
\[
\frac{|K\cap(-K)|}{|K|}\ge\frac{\exp\left(cb_{\alpha}^{-\alpha}n^{\alpha/2}\right)}{2^{n}}.
\]
\end{prop}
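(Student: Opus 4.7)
The plan is to apply inequality (\ref{eq:ent-app2}) with a carefully chosen density $h$ that exploits thin-shell concentration via Theorem \ref{thm:GM2011}. As in the proof of Proposition \ref{prop:KB_psi}, by the affine invariance of both $|K\cap(-K)|/|K|$ and the $\psi_{\alpha}$ constant, we may assume $K$ is in isotropic position, so that $|K|=1$, $\ent[X]=\ln|K|=0$, and $\EE(X\otimes X)=L_{K}^{2}Id$. Letting $Z$ be uniform on $2K$, the random vectors $Z/(2L_{K})$ and $(X+Y)/(\sqrt{2}\,L_{K})$ are then both isotropic, log-concave, and $\psi_{\alpha}$ with constant at most $\sqrt{2}\,b_{\alpha}$ (the latter by a Minkowski computation essentially identical to the one carried out in the proof of Proposition \ref{prop:KB_psi}).

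Next, fix a constant $s\in(0,\sqrt{2}-1)$ and set $R=(1+s)L_{K}\sqrt{2n}$, $A=2K\cap R\uball$, and $B=2K\setminus A$. The key geometric point is that $R$ lies strictly between the concentration radii of the two distributions above: the thin shell of $Z$ sits around radius $2L_{K}\sqrt{n}>R$, while that of $X+Y$ sits around $\sqrt{2}\,L_{K}\sqrt{n}<R$. Applying Theorem \ref{thm:GM2011} to $Z/(2L_{K})$, whose norm $R/(2L_{K})=(1+s)\sqrt{n/2}$ falls strictly into the lower tail of its thin shell, yields
\[
\frac{|A|}{|2K|}=\PP\!\left(\|Z\|_{2}\le R\right)\le C\exp\!\big(-c_{1}b_{\alpha}^{-\alpha}n^{\alpha/2}\big),
\]
and applying it to $(X+Y)/(\sqrt{2}\,L_{K})$ in the upper tail at level $R$, together with $X+Y\in 2K$ almost surely, gives
\[
q_{A}:=\PP(X+Y\in A)\ge 1-C\exp\!\big(-c_{2}b_{\alpha}^{-\alpha}n^{\alpha/2}\big),
\]
for universal constants $c_{1},c_{2}>0$ depending only on $s$.

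Finally, plug $h(x)=\frac{q_{A}}{|A|}\one_{A}(x)+\frac{1-q_{A}}{|B|}\one_{B}(x)$ into (\ref{eq:ent-app2}). Since $\int h=1$ and $\ent[X]=0$, we get
\[
-\ln\!\Big(\tfrac{|K\cap(-K)|}{|K|}\Big)\le -q_{A}\ln\tfrac{q_{A}}{|A|}-(1-q_{A})\ln\tfrac{1-q_{A}}{|B|}=H(q_{A})+q_{A}\ln|A|+(1-q_{A})\ln|B|,
\]
where $H(q_{A})=-q_{A}\ln q_{A}-(1-q_{A})\ln(1-q_{A})\le\ln 2$. Writing $\ln|A|=n\ln 2+\ln(|A|/|2K|)$, bounding $\ln|B|\le n\ln 2$, and plugging in the above estimates for $q_{A}$ and $|A|/|2K|$, the right-hand side is at most $n\ln 2-c\,b_{\alpha}^{-\alpha}n^{\alpha/2}+O(1)$, which yields the claim after absorbing the $O(1)$ into the constant.

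The main obstacle is the upper bound on $|A|/|2K|$: this is a \emph{reverse} use of Theorem \ref{thm:GM2011}, since rather than bounding mass outside a thin shell of a log-concave measure, we need to upper bound the mass of the uniform distribution on $2K$ \emph{inside} a Euclidean ball that is strictly smaller than its own concentration radius. The ratio $R/(2L_{K}\sqrt{n})=(1+s)/\sqrt{2}$ being bounded away from $1$, which is precisely why we must take $s<\sqrt{2}-1$, is what allows the lower-tail form of the thin-shell estimate to apply and deliver the required exponentially small bound.
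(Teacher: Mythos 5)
Your proof is correct, and it stays within the same framework as the paper's argument for Proposition \ref{prop:MP_psi} --- the entropy inequality (\ref{eq:ent-app2}) combined with Theorem \ref{thm:GM2011} --- but it makes a genuinely different choice of test function $h$. The paper takes $h(x)=\exp(-\lambda\|x\|_2^2)\one_{2K}$, so that $\EE[-\ln h(X+Y)]=\lambda\EE\norm{X+Y}_2^2=2\lambda nL_K^2$ is computed exactly from independence (nothing about the distribution of $X+Y$ beyond its covariance is used), and only the lower-tail thin-shell estimate for $X$ enters, via the split of $\int_{2K}e^{-\lambda\|x\|_2^2}$ over $A_t=\{x\in K:\norm x_2\le(1-t)\sqrt nL_K\}$ and its complement, with $\lambda\sim b_\alpha^{-\alpha}n^{\alpha/2-1}L_K^{-2}$ chosen to balance the two pieces. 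You instead transplant the two-thin-shells comparison of Proposition \ref{prop:KB_psi} into the entropy framework: a two-level piecewise-constant $h$ adapted to the intermediate ball $RB_2^n$, which requires both the lower tail for the uniform measure on $2K$ (the ``reverse use'' you worry about is nothing exotic --- it is precisely the lower-deviation half of Theorem \ref{thm:GM2011}, used in exactly the same way in the paper's bound on $|A_t|$) and the upper tail for $X+Y$, hence the transfer of the $\psi_\alpha$ property to $(X+Y)/(\sqrt2\,L_K)$ via Minkowski's inequality, which the paper's choice of $h$ avoids. Both routes give the same final bound $n\ln 2-c\,b_\alpha^{-\alpha}n^{\alpha/2}+O(1)$; yours is conceptually closer to Section \ref{sec:double-thin-shell}, while the paper's is a bit leaner in what it needs about $X+Y$. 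Two small points to tidy: your $h$ is positive on all of $2K$ only if $0<q_A<1$, which does hold because an isotropic $K$ with $|K|=1$ satisfies $\int_K\norm x_2^2\,dx=nL_K^2$ and therefore contains points of norm at least $\sqrt n\,L_K>R/2$, so $B$ has positive volume and carries positive mass of $X+Y$; and the step $q_A\ln\left(|A|/|2K|\right)\le -c\,b_\alpha^{-\alpha}n^{\alpha/2}+O(1)$ needs $q_A$ bounded below (say $q_A\ge 1/2$ once $b_\alpha^{-\alpha}n^{\alpha/2}$ exceeds a universal threshold, smaller values being absorbed into the constant $c$) --- the same kind of constant-absorption the paper performs implicitly with its $\ln(C+1)$ term.
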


\begin{proof}
We begin by observing that both sides of (\ref{eq:ent-app1}) are
invariant under invertible linear transformations of $K,$ therefore
we can assume without loss of generality that $K$ is in isotropic
position. We then apply (\ref{eq:ent-app2}) with $h(x):=\exp(-\lambda\|x\|_{2}^{2})\one_{2K}$
for some constant $\lambda$ to be specified later. The right hand
side becomes

\begin{align}
\EE[\lambda\norm{X+Y}_{2}^{2}]+\ln\int_{2K}\exp(-\lambda\|x\|_{2}^{2})\,dx-\ln1 & =2\EE[\lambda\|X\|_{2}^{2}]+\ln\int_{2K}\exp(-\lambda\|x\|_{2}^{2})\,dx\nonumber \\
 & =2\lambda nL_{K}^{2}+n\ln2+\ln\int_{K}\exp(-4\lambda\|x\|_{2}^{2})\,dx.\label{eq:psi-ent-bound1}
\end{align}

To estimate the last integral, we employ again the thin-shell estimates
from Theorem \ref{thm:GM2011}, which imply that for $A_{t}:=\left\{ x\in K:\norm x_{2}\leq(1-t)\sqrt{n}L_{K}\right\} $,
one has 
\[
\left|A_{t}\right|\leq C\exp\left(-c^{\prime}b_{\alpha}^{-\alpha}t^{2+\alpha}n^{\alpha/2}\right)
\]
for all $t\in\left[0,1\right].$ We can thus break the integral into
two as follows:

\begin{align*}
\int_{K}\exp(-4\lambda\|x\|_{2}^{2})\,dx & =\int_{A_{t}}\exp(-4\lambda\|x\|_{2}^{2})\,dx+\int_{K\setminus A_{t}}\exp(-4\lambda\|x\|_{2}^{2})\,dx\\
 & \leq C\exp\left(-c^{\prime}b_{\alpha}^{-\alpha}t^{2+\alpha}n^{\alpha/2}\right)+\exp(-4\lambda(1-t)^{2}nL_{K}^{2}).
\end{align*}
 We now set $t=1-\frac{2}{\sqrt{5}}$ say, and then we choose our
$\lambda$ so that
\[
c^{\prime}b_{\alpha}^{-\alpha}t^{2+\alpha}n^{\alpha/2}=4\lambda(1-t)^{2}nL_{K}^{2}.
\]
 It follows that $\lambda$ is of the order of $b_{\alpha}^{-\alpha}n^{\alpha/2-1}L_{K}^{-2}$.
Combining these estimates with (\ref{eq:ent-app2}) and (\ref{eq:psi-ent-bound1}),
we obtain
\begin{align*}
-\ln\left(\frac{|K\cap(-K)|}{|K|}\right) & \leq2\lambda nL_{K}^{2}+n\ln2+\ln(C+1)-\frac{16}{5}\lambda nL_{K}^{2}\\
 & =n\ln2+\ln(C+1)-\frac{6}{5}\lambda nL_{K}^{2}\\
 & =n\ln2+\ln(C+1)-c^{\prime\prime}b_{\alpha}^{-\alpha}n^{\alpha/2}
\end{align*}
for some absolute constant $c^{\prime\prime}$ (which we can compute
explicitly by the above relations). Exponentiating, we complete the
proof.
\end{proof}
\begin{proof}[Proof of second part of Theorem \ref{thm:uniform_convex}]
 This time we only assume for simplicity that $|K|=1,$ and we apply
(\ref{eq:ent-app2}) with $h(x):=\exp\left(-\lambda\|x\|_{K}\right)$
for some constant $\lambda$ to be specified later. We immediately
get 

\begin{align*}
-\ln\left(\frac{|K\cap(-K)|}{|K|}\right) & \leq\EE\left[\lambda\|X+Y\|_{K}\right]+\ln\irn\exp(-\lambda\|x\|_{K})\,dx\\
 & =\lambda\EE\left[\|X+Y\|_{K}\right]+\ln(\lambda^{-n}n!|K|)=\lambda\EE\left[\|X+Y\|_{K}\right]-n\ln\lambda+\ln(n!).
\end{align*}
 Optimizing over $\lambda$ yields
\begin{equation}
-\ln\left(\frac{|K\cap(-K)|}{|K|}\right)\leq n\ln\EE\left[\|X+Y\|_{K}\right]+\ln\frac{n!e^{n}}{n^{n}}.\label{eq:unif-ent-bound1}
\end{equation}
Given that $n!\leq en^{n+1/2}e^{-n},$ the last term is upper-bounded
by $\ln(e\sqrt{{n}})$, so the final estimate will depend on how
well we can bound $\EE\left[\|X+Y\|_{K}\right].$ We will use again
the concentration result of Arias-De-Reyna, Ball, and Villa. Note
that by the triangle inequality $\|X+Y\|_{K}\leq2$, and therefore,
by the definition of the modulus of convexity, we have for any $\varepsilon\in(0,2),$

\begin{align*}
\EE\left[\|X+Y\|_{K}\right] & =\EE\left[\|X+Y\|_{K}\one_{{\|X-Y\|_{K}\leq\varepsilon}}\right]+\EE\left[\|X+Y\|_{K}\one_{{\|X-Y\|_{K}>\varepsilon}}\right]\\
 & \leq2\PP\left(\|X-Y\|_{K}\leq\varepsilon\right)+2\left(1-\delta_{K}(\varepsilon)\right)\PP\left(\|X-Y\|_{K}>\varepsilon\right)\\
 & =2\left[1-\delta_{K}(\varepsilon)\PP\left(\|X-Y\|_{K}>\varepsilon\right)\right].
\end{align*}
 Applying this now with some $\varepsilon\in\left(0,\sqrt{2}\right)$
for which $\delta_{K}(\varepsilon)\geq r$, and recalling (\ref{eq:conc_dist}),
we obtain
\[
\EE\left[\|X+Y\|_{K}\right]\leq2\left[1-\delta_{K}(\varepsilon)\left(1-\exp\left(-\tfrac{\left(\sqrt{2}-\eps\right)^{2}n}{4}\right)\right)\right]\leq2\left[1-r\left(1-\exp\left(-\tfrac{\left(\sqrt{2}-\eps\right)^{2}n}{4}\right)\right)\right],
\]
 which we can plug into (\ref{eq:unif-ent-bound1}) to complete the
proof.
\end{proof}

\section{\label{sec:Conclusions}Concluding remarks}

We conclude this note with some remarks, questions and conjectures.
\begin{conjecture}
There exists a universal constant $c>0$ such that for every centered
convex body $K\sub\R^{n}$ and some $0<r<1$ one has 
\[
\frac{\PP\left(\frac{X+Y}{2}\in rK\right)}{\PP\left(X\in rK\right)}\ge\left(1+c\right)^{n},
\]
where $X$ and $Y$ are independent random vectors, uniformly distributed
on $K$. 
\end{conjecture}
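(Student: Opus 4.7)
Observe first that $X$ is uniform on $K$, so $\PP(X \in rK) = r^n$ for $r \in [0,1]$, and the conjectured ratio equals $\phi(r) := \PP(Z \in rK)/r^n$ with $Z := (X+Y)/2$. My plan is to study $\phi$ on $[0,1]$ and exhibit some $r$ at which $\phi(r) \geq (1+c)^n$. Two basic facts fix the endpoints: $\phi(0^+) = g(0)|K| = 2^n |K \cap (-K)|/|K|$, which by Theorem \ref{thm:main} is at least $\exp(c\sqrt{n})$, while $\phi(1) = 1$. In particular, the $r = 0$ case of the conjecture is exactly the statement $\kb{K} \geq ((1+c)/2)^n$, essentially the long-open K\"ovner--Besicovitch simplex conjecture; the freedom to pick $r > 0$ is what makes the ratio conjecture potentially accessible even without resolving the simplex conjecture.

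My first attempt would mimic the thin-shell comparison from the proof of Proposition \ref{prop:KB_psi}. After normalizing $K$ to isotropic position, $\|X\|_2$ concentrates near $\sqrt{n}\,L_K$ while $\|Z\|_2$ concentrates near $\sqrt{n/2}\,L_K$. Picking $r$ so that the dilate $rK$ roughly occupies the annulus between these two Euclidean shells, the argument of Proposition \ref{prop:KB_psi} already gives $\PP(Z \in rK)/\PP(X \in rK) \geq \exp(c\sqrt{n})$. To push the exponent from $\sqrt{n}$ up to $n$, I would add the observation that, on top of the Euclidean shell discrepancy, the event $\{X \in rK\}$ forces $\|X\|_K \leq r$, whereas $\{Z \in rK\}$ imposes no such tight constraint on the typical value of $\|Z\|_K$ (which should be significantly smaller than $r$ by log-concavity); quantifying this gap via a coordinatewise or slicing argument should produce an additional per-dimension factor.

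A complementary route would adapt the entropy method of Section \ref{sec:entropy}. Applied with an auxiliary weight such as $h(x) := \exp(-\lambda\|x\|_K)\one_{2rK}(x)$, the inequality leading to (\ref{eq:ent-app2}) yields a lower bound on the density of $Z$ at the origin involving the conditional expectation $\EE[\|X+Y\|_K \mid X+Y \in 2rK]$, which is strictly less than $2$ by a definite amount, together with the log-volume of $2rK$. Integrating this density estimate over $rK$ and optimizing in $\lambda$ and $r$ is a natural route to the claimed exponential ratio, and has the advantage of converting conditional concentration (which may be quite strong) directly into volume comparisons.

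The hard part will be extracting an exponential-in-$n$ gain from tools that currently deliver only sub-exponential ones. Theorem \ref{thm:GM2011} provides concentration on scale $n^{\alpha/2}$, yielding $\exp(c\sqrt{n})$ for general log-concave $X$ and exactly the $\exp(cn)$ exponent for $\psi_2$ bodies used in Corollary \ref{cor:psi_2}. The natural route to $(1+c)^n$ unconditionally seems to demand either a resolution of the thin-shell (or full KLS) conjecture --- which would widen the relative two-shell separation from $O(1/\sqrt{n})$ to $O(1)$ and promote each $\sqrt{n}$ appearing above into $n$ --- or a genuinely new comparison principle between $X$ and $Z$ adapted to dilates of $K$ rather than Euclidean balls. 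Pending such progress, the conjecture is already within reach under extra geometric hypotheses ($\psi_2$ behavior or positive modulus of convexity), where Corollary \ref{cor:psi_2} together with Theorems \ref{thm:Hadwiger-psi2} and \ref{thm:Hadwiger_UnifConv} deliver exponential savings of the required strength.
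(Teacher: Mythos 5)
You have not proved this statement, and neither does the paper: this is an open conjecture stated in the concluding remarks (the paper explicitly notes only that it \emph{would} imply an exponentially better Hadwiger bound), so there is no proof of record to match. Your proposal is essentially a research plan, and to your credit it diagnoses the true obstruction: the tools available (Theorem \ref{thm:GM2011}) concentrate on scale $n^{\alpha/2}$, which yields $\exp(c\sqrt{n})$ unconditionally and $\exp(cn)$ only under a $\psi_{2}$ hypothesis, whereas the conjecture demands an unconditional gain of $(1+c)^{n}$. Your endpoint computations are correct ($\PP(X\in rK)=r^{n}$ since the barycenter lies in ${\rm int}\,K$, and $\phi(0^{+})=2^{n}|K\cap(-K)|/|K|$), but neither of your two routes closes the gap, and you say so yourself; acknowledging a missing idea does not supply it.

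One intermediate claim in your first route is moreover not justified as stated: you assert that the argument of Proposition \ref{prop:KB_psi} ``already gives'' $\PP\left(\frac{X+Y}{2}\in rK\right)/\PP\left(X\in rK\right)\ge\exp(c\sqrt{n})$ for a suitable dilate $rK$. The proposition compares the two measures on the Euclidean ball $rL_{K}\sqrt{n}B_{2}^{n}$ (intersected with $K$), which is exactly what thin-shell estimates control; the conjecture requires the comparison on dilates $rK$, i.e.\ control of $\norm{X}_{K}$ and $\norm{\frac{X+Y}{2}}_{K}$, and Euclidean-shell concentration does not transfer to the gauge of a general $K$ (indeed $\norm{X}_{K}$ concentrates at $1$ with $\PP(\norm{X}_{K}\le r)=r^{n}$, while all one knows cheaply about $\norm{X+Y}_{K}$ is the triangle-inequality bound $\EE\norm{X+Y}_{K}\le 2-\frac{2}{n+1}$, which the paper's cube computation shows is essentially sharp in general). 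Finding a comparison principle adapted to dilates of $K$ rather than Euclidean balls is precisely the content of the conjecture, so even the sub-exponential version of your claim would need a separate argument. As it stands, the proposal should be read as a discussion of why the conjecture is plausible and hard, not as a proof.
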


We remark that the above conjecture implies an exponentially better
upper bound for Hadwiger's covering problem. Moreover, the conjecture
seems interesting in its own right and attempts in a way to quantify the intuition
that the convolution of a uniform distribution with itself looks already
more like a ``bell curve'' than like the flat distribution it originates
from. 

\smallskip

Another question that would capture this if answered in the
affirmative is the following. Let $X$ and $Y$ be independent random vectors, uniformly distributed
on a centred convex body $K$. Is it true that $\EE\|X+Y\|_{K} \leq 2 - \Omega(n^{-\alpha})$ with $\alpha\in [0,1)$ independent of $K$? Or rather, given such an $\alpha$, for which convex bodies in ${\mathbb R}^n$ does this bound hold? (Note that any such bound would improve on the trivial upper bound coming from the triangle inequality, which totally neglects independence: $\EE\|X+Y\|_{K} \leq 2\EE\|X\|_K = 2(1- \frac{1}{n+1})$.)  In the previous section we proved that $\EE\|X+Y\|_{K}$ is indeed upper-bounded by a constant smaller than $2$ for convex bodies with a positive modulus of convexity. For the cube however, it can be checked that $\EE\|X+Y\|_{K} = 2\bigl(1-\frac{4^n}{(2n+1)\binom{2n}{n}}\bigr) \sim 2 - \sqrt{\frac{\pi}{n}}$. Thus, we can also ask whether, in general, the bound $2-\Omega(n^{-1/2})$ is the worst case. If this is true, it would give another proof for our Main Theorems \ref{thm:Hadwiger} and \ref{thm:main}.

\medskip

The quantity $\ent\left[X+Y\right]-\ent\left[X\right]$ which appears
on the right hand side of (\ref{eq:ent-app1}) has been studied in
the context of reverse entropy power inequalities for convex measures,
a natural generalisation of log-concave measures (see \cite{BobMad13}
and \cite{MadKont18}). 
The upper bounds obtained there (when specialized
to the log-concave case) as well as our improved bounds are perhaps
far from optimal. To the best of our knowledge, a sharp upper bound
is not known even in dimension one. We believe the extremiser would
be a one-sided exponential distribution. 

Furthermore, in higher dimensions we can conjecture the following: for some universal constant $\varepsilon > 0$ and for every i.i.d. log-concave random vectors $X$ and $Y$ in $\R^n$, $\ent\left[X+Y\right]-\ent\left[X\right] \leq n(\ln 2 - \varepsilon)$. An even more ambitious guess here is that the extremiser should be the product one sided exponential distribution (giving the upper bound $n\gamma$ with $\gamma = 0.57\ldots$ denoting the Euler-Mascheroni constant) and the simplex for uniform distributions on convex bodies. 

Recall that our strategy from the proof of Proposition \ref{prop:MP_psi} for bounding $\ent\left[X+Y\right]-\ent\left[X\right]$ was to normalise $X$ to be isotropic and choose a Gaussian function $h(x) = \exp(-\lambda\|x\|_{2}^{2})$ in \eqref{eq:ent-app1}. Note however that any further improvements while working with this function $h$ might be particularly hard: by relying on now classical volume concentration results as well as on reductions for the slicing problem from \cite{dafnis}, it is possible to check that, if this choice for $h$ yields the bound $n(\ln 2 - \varepsilon)$ for uniform random vectors, then this also implies logarithmic bounds for the slicing problem.

\medskip

Theorem \ref{thm:main} has an immediate application in the geometry of numbers, and in particular to Ehrhart's conjecture from \cite{Ehr1, Ehr2}. This conjecture states that for every convex body $K$ in $\R^n$ with barycenter at the origin and such that the only lattice point of $\mathbb{Z}^n$ in the interior of $K$ is the origin, we have $|K| \leq \frac{(n+1)^n}{n!}$ (with equality attained when $K$ is the simplex $K = (n+1)\text{conv}\{0, e_1, \dots, e_n\}-(1,\dots,1)$).

Ehrhart's conjecture has been confirmed for $n=2$ by Ehrhart, and in some special cases (see \cite{BeBe, Ehr3, NiPa}), but it remains open in general for $n \geq 3$. The general bound $|K| \leq (n+1)^n(1-(1-1/n)^n)$ was established in \cite{GrWi}. The better bound $|K|\le 4^{n}$ is a direct consequence of a more general result, namely \cite[Proposition 1.1]{BeHe}, concerning a strengthening of Ehrhart's conjecture; see also \cite{HeHeHe} for a simpler derivation of the bound $|K|\le 4^{n}$, which we also follow below.  

Both derivations of this bound made use of the Milman-Pajor inequality \eqref{eq:MP}, so Theorem \ref{thm:main} allows us now to obtain the following improvement.

\begin{prop}\label{prop:ehr}
Let $K$ be a convex body in $\R^n$ with $b(K)=0$ such that $\text{int}(K) \cap \mathbb{Z}^n = \{0\}$. Then $|K| \leq 4^ne^{-c\sqrt{n}}$, where $c > 0$ is a universal constant.
\end{prop}
\begin{proof}
Since $K \cap (-K)$ is origin-symmetric and its interior contains no lattice point other than the origin, by Minkowski's theorem we obtain that $|K \cap (-K)| \leq 2^n$. Thus, Theorem \ref{thm:main} gives $\frac{2^n}{|K|} \geq 2^{-n}e^{c\sqrt{n}}$.
\end{proof}

\begin{acknowledgements*}
We are indebted to Martin Henk for remarking on the applicability of our main result to Ehrhart's conjecture. We also thank Shiri Artstein-Avidan and Bo'az Klartag for useful discussions. 

We thank the Mathematical Sciences Research Institute in Berkeley,
California, where part of this work was done during the Fall 2017
semester; we acknowledge the support of the institute and of the National
Science Foundation under Grant No. 1440140. TT was supported in part by the Collaboration Grants from the Simons Foundation and NSF grant DMS-1955175.
\end{acknowledgements*}
\bibliographystyle{amsplain_abr}
\bibliography{refs}

\end{document}